\theoremstyle{plain}
\newtheorem{thm}{Theorem}[section]
\newtheorem{prop}[thm]{Proposition}
\newtheorem{lem}[thm]{Lemma}
\newtheorem{cor}[thm]{Corollary}
\theoremstyle{definition}
\newtheorem{defn}[thm]{Definition}
\theoremstyle{remark}
\newtheorem{rem}{Remark}[thm]
\newcommand{\abs}[1]{\left\vert{#1}\right\vert}
\newcommand{\set}[1]{\left\{{#1}\right\}}
\newcommand{\Natural}{\mathbb{N}}
\newcommand{\El}{\ell}
\newcommand{\Pl}{{p^\El}}
\newcommand{\Fq}{{\FF_q}}
\newcommand{\FF}{\mathbb{F}}
\newcommand{\FE}{\mathbb{E}}
\newcommand{\FK}{\mathbb{K}}
\newcommand{\Fk}{\Bbbk}
\newcommand{\dv}{\!\mid\!}
\newcommand{\ndv}{\!\nmid\!}
\newcommand{\gn}[1]{{\langle{#1}\rangle}}
\newcommand{\Dim}[1]{{\mathrm{dim}\,{#1}}}
\newcommand{\DimK}[1]{{\mathrm{dim}_\FK{#1}}}
\newcommand{\Dimk}[1]{{\mathrm{dim}_\Fk{#1}}}
\newcommand{\slnq}{\mathfrak{sl}(n,q)}
\newcommand{\glnq}{\mathfrak{gl}(n,q)}
\newcommand{\glnK}{\mathfrak{gl}(n,\FK)}
\newcommand{\slnK}{\mathfrak{sl}(n,\FK)}
\newcommand{\glnk}{\mathfrak{gl}(n,\Fk)}
\newcommand{\GLnq}{\mathrm{GL}(n,q)}
\newcommand{\GLnK}{\mathrm{GL}(n,\FK)}
\newcommand{\GLnk}{\mathrm{GL}(n,\Fk)}
\begin{document}

\title{Cartan subalgebras in general and special linear algebras}

\author{Kyoung-Tark Kim}
\address{Department of mathematics, Pusan National University, jang-jeon dong, Busan, Republic of Korea}
\email{poisonn00@hanmail.net}

\maketitle

\begin{abstract}
  In this paper we study Cartan subalgebras in general and special linear algebras over a field of positive characteristic.
  We determine the conjugacy classes of Cartan subalgebras under the general linear groups, and count the explicit number of all Cartan subalgebras from its conjugacy when the base field is an arbitrary finite field.
\end{abstract}

\section{Introduction}

\noindent Let $\FK$ be a field of positive characteristic $p$.
By the \textit{general linear algebra} $\glnK$ we mean the vector space of all $n\times n$ matrices over $\FK$ together with a unary operation $X \mapsto X^p$, called the \textit{$p$-map}, and a binary operation $(X,Y) \mapsto [X,Y] = XY - YX$, called the \textit{commutator} or the \textit{Lie bracket} where $X$ and $Y$ are in $\glnK$.
The \textit{special linear algebra} $\slnK$ is the subspace of all trace zero matrices in $\glnK$ together with the inherited operations.
(It is well-known and easy to see that $\slnK$ is closed under the $p$-map and the commutator.)

A \textit{torus} $\mathfrak{T}$ is defined as a subspace of $\glnK$ satisfying
\begin{enumerate}
\item [(T1)] $\mathfrak{T}$ is closed under the $p$-map;
\item [(T2)] $[X,Y] = 0$ for all $X, Y \in \mathfrak{T}$;
\item [(T3)] each matrix in $\mathfrak{T}$ is \textit{absolutely semisimple}, i.e., diagonalizable over an algebraic closure $\Fk$ of $\FK$.
\end{enumerate}

Let $\mathfrak{D}$ and $\mathfrak{D}_0$ be respectively the subsets of all diagonal matrices in $\glnK$ and $\slnK$.
Clearly, $\mathfrak{D}$ and $\mathfrak{D}_0$ are tori with $\Dim{\mathfrak{D}} = n$ and $\Dim{\mathfrak{D}_0} = n-1$.
They are maximal by the following basic observation:

\begin{prop}\label{maximaldim}
  Let $\mathfrak{T}$ be a torus.
  Then, $\mathfrak{T}$ is maximal in $\glnK$ (in $\slnK$, respectively) if and only if $\Dim{\mathfrak{T}}$ is equal to $n$ (resp. $n-1$).
\end{prop}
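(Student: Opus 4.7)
The plan is to first establish the uniform upper bound $\dim_\FK \mathfrak{T} \leq n$ by extension of scalars, then treat $\glnK$ and $\slnK$ separately, reducing the second case to the first via the trace map. Extending scalars to $\Fk$, the set $\mathfrak{T} \otimes_\FK \Fk$ is a commuting family of diagonalizable matrices in $\glnk$, hence simultaneously diagonalizable; after conjugation it sits inside the diagonal subalgebra of $\glnk$, giving $\dim_\FK \mathfrak{T} = \dim_\Fk (\mathfrak{T} \otimes \Fk) \leq n$. This already settles the "if" direction for $\glnK$: no torus can properly contain one of dimension $n$.

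The "only if" direction for $\glnK$ is the crux. I would exploit maximality to show $\mathfrak{T}$ is in fact a unital \'etale $\FK$-subalgebra of $M_n(\FK)$. Since $1$ commutes with everything and $(X+c)^p = X^p + c^p$ in characteristic $p$, the space $\mathfrak{T} + \FK\cdot 1$ is again a torus, so maximality forces $1 \in \mathfrak{T}$. A similar argument shows the subalgebra $A := \FK[\mathfrak{T}]$ is a torus: commutativity is automatic; $p$-closure follows from $(\sum c_j m_j)^p = \sum c_j^p m_j^p$ for commuting summands together with $p$-closure of $\mathfrak{T}$; absolute semisimplicity follows from simultaneous diagonalization of $\mathfrak{T}$ over $\Fk$. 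So $\mathfrak{T} = A$ is a finite-dimensional commutative $\FK$-algebra whose elements all have separable minimal polynomial, and therefore decomposes as $\mathfrak{T} \cong \FE_1 \times \cdots \times \FE_r$ with each $\FE_i/\FK$ finite separable. The primitive idempotents decompose $\FK^n = \bigoplus V_i$ with $V_i$ an $\FE_i$-vector space of some dimension $d_i \geq 1$, giving $n = \sum d_i[\FE_i:\FK]$ and $\dim_\FK \mathfrak{T} = \sum [\FE_i:\FK]$. The decisive step: if some $d_i > 1$, choose an $\FE_i$-basis of $V_i$ to identify $\mathrm{End}_{\FE_i}(V_i) \cong M_{d_i}(\FE_i)$, and replace the $i$-th factor $\FE_i$ (acting as $\FE_i$-scalars on $V_i$) by the full diagonal copy of $\FE_i^{d_i}$ therein. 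The enlarged subalgebra is still commutative, still $p$-closed, still \'etale over $\FK$, hence still a torus, and strictly contains $\mathfrak{T}$, contradicting maximality. Therefore all $d_i = 1$ and $\dim \mathfrak{T} = n$.

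For $\slnK$, the bridge is the trace. On a maximal torus $\mathfrak{T} \cong \prod \FE_i$ of $\glnK$, the trace is $\sum \mathrm{Tr}_{\FE_i/\FK}$ and is surjective onto $\FK$ because separable field traces are nonzero. Moreover $\mathfrak{T} \cap \slnK$ is itself a torus (the only nontrivial point being $p$-closure, which follows from $\mathrm{tr}(X^p) = \mathrm{tr}(X)^p = 0$ for absolutely semisimple $X$, via eigenvalues in $\Fk$), of $\FK$-dimension $n-1$. Given a maximal torus $\mathfrak{T}_0$ of $\slnK$, embed it (via Zorn, using the dimension bound to bound chains) in a maximal torus $\mathfrak{T}$ of $\glnK$; then $\mathfrak{T}_0 \subseteq \mathfrak{T} \cap \slnK$ and maximality of $\mathfrak{T}_0$ forces equality, so $\dim \mathfrak{T}_0 = n-1$. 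Conversely, any torus $\mathfrak{T}_0 \subseteq \slnK$ of dimension $n-1$ admits no proper enlargement in $\slnK$: such an enlargement would have dimension $\geq n$, hence be a maximal torus of $\glnK$ lying inside $\slnK$, contradicting the trace surjectivity.

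The main obstacle is the structural step for $\glnK$: showing that a maximal torus is forced to be a unital subalgebra, and then identifying it with a product of finite separable field extensions. Once this \'etale description is in place, the dimension count and the enlargement argument for $d_i > 1$ are essentially formal, and the $\slnK$ case follows quickly via the trace.
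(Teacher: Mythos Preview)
Your argument is correct, but it takes a substantially different route from the paper's.

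The paper's proof is a three-line reduction: after the same scalar-extension upper bound you give, it invokes Winter's theorem \cite[Theorem~2.15]{Winter} to assert that if $\mathfrak{T}$ is maximal in $\glnK$ then $\overline{\mathfrak{T}} = \Fk \otimes_\FK \mathfrak{T}$ is maximal in $\glnk$; since $U^{-1}\overline{\mathfrak{T}}U$ then sits inside $\overline{\mathfrak{D}}$ and is maximal, it must equal $\overline{\mathfrak{D}}$, giving dimension $n$. The $\slnK$ case is dismissed as ``similar.'' So the paper treats preservation of maximality under base change as a black box.

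You instead stay over $\FK$ and prove the structure directly: maximality forces $1 \in \mathfrak{T}$ and $\FK[\mathfrak{T}] = \mathfrak{T}$, so $\mathfrak{T}$ is an \'etale $\FK$-algebra $\prod_i \FE_i$; the natural $\FK^n$-module then forces each $\FE_i$-isotypic piece to have $\FE_i$-dimension exactly $1$, else one can enlarge $\mathfrak{T}$ to a diagonal $\FE_i^{d_i}$. This is self-contained and in fact anticipates the content of Section~2 of the paper (your intermediate steps are essentially Propositions~\ref{closedmp}, \ref{sscaa} and Corollary~\ref{dsofefs}); the paper's logical order is the reverse, using Proposition~\ref{maximaldim} (via Winter) as input to those later structure results. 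Your $\slnK$ argument via trace surjectivity on separable extensions is also more explicit than the paper's ``similar,'' and it makes transparent why the intersection $\mathfrak{T} \cap \slnK$ has codimension exactly one. One small sloppiness: when you check $p$-closure of $A = \FK[\mathfrak{T}]$ you write $(\sum c_j m_j)^p = \sum c_j^p m_j^p$ with $m_j \in \mathfrak{T}$, but elements of $A$ are polynomials, not just linear combinations; the fix is immediate (Frobenius is a ring endomorphism on the commutative subring $A$), but worth stating cleanly.
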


\begin{proof}
  Since all matrices in $\mathfrak{T}$ commute there is $U \in \GLnk$ such that $U^{-1}\overline{\mathfrak{T}}U \subseteq \overline{\mathfrak{D}}$ where, for any $\FK$-vector space $\mathfrak{V}$, $\overline{\mathfrak{V}} := \Fk \otimes_\FK \mathfrak{V}$ denotes the usual scalar extension.
  Since $\Dimk{\overline{\mathfrak{D}}} = \DimK{\mathfrak{D}} = n$ and $\Dimk{U^{-1}\overline{\mathfrak{T}}U} = \DimK{\mathfrak{T}}$ we have $\DimK{\mathfrak{T}} \leq n$.
  So if $\DimK{\mathfrak{T}} = n$ then $\mathfrak{T}$ is maximal in $\glnK$.
  Conversely suppose that $\mathfrak{T}$ is maximal in $\glnK$.
  Since $\overline{\mathfrak{T}}$ (hence $U^{-1}\overline{\mathfrak{T}}U$) is a maximal torus in $\glnk$ by \cite[Theorem 2.15, p. 73]{Winter}, we have $\DimK{\mathfrak{T}} = n$.
  The proof in $\slnK$ is similar.
\end{proof}

In a Lie algebra $\mathfrak{g}$ of arbitrary characteristic one can define a \textit{Cartan subalgebra} (abbreviated, \textit{CSA}) $\mathfrak{H}$ as a Lie subalgebra of $\mathfrak{g}$ satisfying
\begin{enumerate}
\item [(C1)] $\mathfrak{H}$ is a nilpotent Lie algebra;
\item [(C2)] the normalizer of $\mathfrak{H}$ in $\mathfrak{g}$ is $\mathfrak{H}$ itself, i.e., $\mathfrak{H}$ is self-normalizing.
\end{enumerate}

Traditionally the role of CSAs is central in the theory of finite dimensional Lie algebras of characteristic \emph{zero}:
The definition of a CSA gives the \textit{Fitting-Zassenhaus} decomposition of a Lie algebra.
In the case of characteristic zero, the existence of a CSA was proved by showing that a CSA is the centralizer of a \textit{regular} element, namely, a minimal \textit{Engel} subalgebra.
Moreover, when the base field is algebraically closed, the well-known conjugacy theorem for CSAs assures the `uniqueness'.

In this paper we focus on CSAs in general and special linear algebras (which are probably the most naive to understand) over a field of positive characteristic, and consider the structure and the conjugacy problems about them.
In particular, when the ground field is finite, we concentrate on counting problem for CSAs.

In fact the algebraic structures like $\glnK$ or $\slnK$ have a given name - a \textit{restricted Lie algebras} or a \textit{Lie $p$-algebras} - initially  introduced by N. Jacobson.
In a restricted Lie algebra there is a fundamental connection between CSAs and maximal tori:

\begin{thm}\cite{Winter}\label{winterthm}
  Let $\mathfrak{g}$ be a restricted Lie algebra.
  Then, $\mathfrak{H}$ is a CSA of $\mathfrak{g}$ if and only if $\mathfrak{H}$ is the centralizer of a maximal torus in $\mathfrak{g}$.
\end{thm}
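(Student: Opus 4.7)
The plan is to exploit two foundational features of the restricted setting: the Jordan--Chevalley decomposition $x = x_s + x_n$, with $x_s$ absolutely semisimple, $x_n$ $p$-nilpotent, $[x_s,x_n]=0$, and both $x_s, x_n$ lying in the restricted subalgebra of $\mathfrak{g}$ generated by $x$; together with the weight-space decomposition on $\overline{\mathfrak{g}} := \Fk \otimes_\FK \mathfrak{g}$ induced by the $\ad$-action of a torus. Both directions pivot on the way a $p$-semisimple element in the centralizer of a torus interacts with that torus's weight decomposition.

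$(\Leftarrow)$ Let $\mathfrak{T}$ be a maximal torus and set $\mathfrak{H} := C_{\mathfrak{g}}(\mathfrak{T})$. Since $\mathfrak{T}$ is commutative and its elements are absolutely semisimple, $\ad \mathfrak{T}$ is simultaneously diagonalizable on $\overline{\mathfrak{g}}$, producing a weight decomposition $\overline{\mathfrak{g}} = \bigoplus_\alpha \overline{\mathfrak{g}}_\alpha$ with $\overline{\mathfrak{g}}_0 \cap \mathfrak{g} = \mathfrak{H}$. Condition (C2) is a weight-space argument: for $x \in N_{\mathfrak{g}}(\mathfrak{H})$ written as $x = \sum_\alpha x_\alpha$ in $\overline{\mathfrak{g}}$, the relation $[t,x] \in \mathfrak{H}$ for every $t \in \mathfrak{T}$ forces $\alpha(t) x_\alpha = 0$ for all nonzero $\alpha$ and all $t$, hence $x_\alpha = 0$ for $\alpha \ne 0$ and $x \in \mathfrak{H}$. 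For (C1) the crucial step is to show that for every $h \in \mathfrak{H}$ the semisimple part $h_s$ lies in $\mathfrak{T}$: since $h_s$ belongs to the restricted subalgebra generated by $h$, it commutes with every element commuting with $h$, in particular with $\mathfrak{T}$; being absolutely semisimple and centralizing $\mathfrak{T}$, it spans a torus together with $\mathfrak{T}$ that must equal $\mathfrak{T}$ by maximality. Consequently $\ad h$ acts on $\mathfrak{H}$ exactly as $\ad h_n$, which is nilpotent, so every element of $\mathfrak{H}$ acts nilpotently on $\mathfrak{H}$, and Engel's theorem yields nilpotency.

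$(\Rightarrow)$ Let $\mathfrak{H}$ be a CSA. Self-normalization combined with the restricted identity $\ad(h^p) = (\ad h)^p$ shows that $\mathfrak{H}$ is closed under the $p$-map, and consequently the Jordan components of any $h \in \mathfrak{H}$ belong to the restricted subalgebra generated by $h$, hence to $\mathfrak{H}$. By nilpotency of $\mathfrak{H}$ these semisimple parts commute pairwise, so the set $\mathfrak{T}_0$ of $p$-semisimple elements of $\mathfrak{H}$ is a torus, central in $\mathfrak{H}$. I then claim $\mathfrak{H} = C_\mathfrak{g}(\mathfrak{T}_0)$ and that $\mathfrak{T}_0$ is maximal in $\mathfrak{g}$. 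The inclusion $\mathfrak{H} \subseteq C_\mathfrak{g}(\mathfrak{T}_0)$ is clear; by $(\Leftarrow)$ just proved, $C_\mathfrak{g}(\mathfrak{T}_0)$ is itself a CSA, hence nilpotent, and if the inclusion were strict the normalizer-growth lemma in nilpotent Lie algebras would produce $N_{C_\mathfrak{g}(\mathfrak{T}_0)}(\mathfrak{H}) \supsetneq \mathfrak{H}$, contradicting self-normalization of $\mathfrak{H}$ in $\mathfrak{g}$. Maximality of $\mathfrak{T}_0$ follows: any torus $\mathfrak{T}_1 \supseteq \mathfrak{T}_0$ lies in $C_\mathfrak{g}(\mathfrak{T}_0) = \mathfrak{H}$, so $\mathfrak{T}_1$ consists of $p$-semisimple elements of $\mathfrak{H}$ and equals $\mathfrak{T}_0$.

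The main obstacle is the non-trivial assertion that the Jordan components of $h \in \mathfrak{H}$ actually lie in $\mathfrak{H}$; this genuinely requires the restricted structure (the analogous statement fails for general modular Lie algebras) and rests on the fact that $h_s$, $h_n$ live in the restricted subalgebra generated by $h$, which is in turn absorbed into $\mathfrak{H}$ by the $p$-closure just established. A secondary but equally essential point is the comparison of two CSAs through containment: without the nilpotent normalizer-growth lemma one could not promote $\mathfrak{H} \subseteq C_\mathfrak{g}(\mathfrak{T}_0)$ to equality, and the whole argument would not close the loop from an \emph{a priori} unrelated CSA $\mathfrak{H}$ to the standard centralizer description supplied by $(\Leftarrow)$.
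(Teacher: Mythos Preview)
The paper does not supply a proof of this statement; it is quoted from \cite{Winter} and used as a black box. So there is no in-paper argument to compare against, and the question reduces to whether your proposal stands on its own.

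Your $(\Leftarrow)$ direction is correct and is the standard argument. The $(\Rightarrow)$ direction, however, contains a genuine circularity. You invoke $(\Leftarrow)$ to conclude that $C_{\mathfrak{g}}(\mathfrak{T}_0)$ is a CSA, in particular nilpotent, and then use that nilpotency (via normalizer growth) to obtain $\mathfrak{H} = C_{\mathfrak{g}}(\mathfrak{T}_0)$; only \emph{afterwards} do you deduce that $\mathfrak{T}_0$ is maximal. But the $(\Leftarrow)$ argument you gave requires maximality: the nilpotency step (C1) rested precisely on forcing $h_s \in \mathfrak{T}$ by maximality. For a non-maximal torus the centralizer need not be nilpotent (take $\mathfrak{T}_0 = 0$), so the appeal to $(\Leftarrow)$ is not licensed at that stage, and the loop does not close.

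The repair is to bypass $(\Leftarrow)$ and use the Fitting decomposition directly. A CSA satisfies $\mathfrak{H} = \mathfrak{g}_0(\mathfrak{H})$, the Fitting null component of $\mathfrak{g}$ under $\ad{\mathfrak{H}}$. For each $h \in \mathfrak{H}$ the Jordan decomposition gives $\ad{h} = \ad{h_s} + \ad{h_n}$ with $\ad{h_n}$ nilpotent and commuting with $\ad{h_s}$; since $\ad{h_s}$ has separable minimal polynomial one gets $\mathfrak{g}_0(\ad{h}) = \ker \ad{h_s}$. As $\{h_s : h \in \mathfrak{H}\} = \mathfrak{T}_0$, this yields
\[
\mathfrak{H} \;=\; \mathfrak{g}_0(\mathfrak{H}) \;=\; \bigcap_{h \in \mathfrak{H}} \ker \ad{h_s} \;=\; C_{\mathfrak{g}}(\mathfrak{T}_0),
\]
after which your maximality argument for $\mathfrak{T}_0$ goes through unchanged.
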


From the later sections of this paper we shall obtain the followings:

\begin{prop}\label{mtingl}
  Every maximal torus in $\glnK$ is self-centralizing.
\end{prop}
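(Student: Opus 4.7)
The plan is to reduce to the split case over the algebraic closure $\Fk$ and then invoke the familiar fact that the diagonal subalgebra is self-centralizing in $\glnk$. Let $\mathfrak{T}$ be a maximal torus in $\glnK$ and let $X \in \glnK$ commute with every element of $\mathfrak{T}$; I must show $X \in \mathfrak{T}$. By Proposition \ref{maximaldim}, $\DimK{\mathfrak{T}} = n$.

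First I would extend scalars to $\Fk$. Exactly as in the proof of Proposition \ref{maximaldim}, there exists $U \in \GLnk$ with $U^{-1}\overline{\mathfrak{T}}U \subseteq \overline{\mathfrak{D}}$; since both sides have $\Fk$-dimension $n$, this inclusion is in fact an equality. Because $X$ commutes with every element of $\mathfrak{T}$ over $\FK$, its image in $\glnk$ commutes with every element of $\overline{\mathfrak{T}}$, so the conjugate $U^{-1}XU$ commutes with every element of $\overline{\mathfrak{D}}$.

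Next I would show that $\overline{\mathfrak{D}}$ is self-centralizing in $\glnk$: picking a diagonal matrix with pairwise distinct entries (available because $\Fk$ is infinite), a standard matrix computation forces any commuting matrix to itself be diagonal. Hence $U^{-1}XU \in \overline{\mathfrak{D}}$, and therefore $X \in U\overline{\mathfrak{D}}U^{-1} = \overline{\mathfrak{T}}$.

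The remaining step is to descend back to $\FK$, i.e.\ to verify that $\overline{\mathfrak{T}} \cap \glnK = \mathfrak{T}$ inside $\glnk$. I would fix an $\FK$-basis $T_1,\ldots,T_n$ of $\mathfrak{T}$, extend it to an $\FK$-basis of $\glnK$, and note that the extended family is simultaneously an $\Fk$-basis of $\glnk$, while $\overline{\mathfrak{T}}$ is precisely the $\Fk$-span of the first $n$ members. Comparing coefficients then yields $X \in \mathfrak{T}$. The only mildly delicate point in the whole argument is this last descent identity; every other step is routine linear algebra once the passage to the algebraic closure has been arranged.
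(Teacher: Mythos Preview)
Your argument is correct, but it proceeds along a genuinely different line from the paper. The paper does not pass to the algebraic closure at all; instead it works entirely over $\FK$ via module theory. From Proposition~\ref{sscaa} the maximal torus $\mathfrak{M}$ is a semisimple commutative $\FK$-algebra containing $I_n$, and Lemma~\ref{regimbedstd} then produces an $\mathfrak{M}$-module embedding $\mathfrak{M}\hookrightarrow\FK^n$, which by the dimension equality of Proposition~\ref{maximaldim} is an isomorphism. The centralizer of $\mathfrak{M}$ in $\glnK=\EndK{(\FK^n)}$ is thus $\End_{\mathfrak{M}}(\FK^n)\cong\End_{\mathfrak{M}}(\mathfrak{M})=\mathfrak{M}$, the last equality holding because every module endomorphism of a commutative ring acting on itself is multiplication by an element. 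Your route---diagonalize over $\Fk$, observe that $\overline{\mathfrak{D}}$ is its own centralizer, then descend via $\overline{\mathfrak{T}}\cap\glnK=\mathfrak{T}$---is more elementary and entirely self-contained, needing no Wedderburn-type input. The paper's approach, by contrast, simultaneously establishes that $\FK^n$ is the regular $\mathfrak{M}$-module, a fact (Corollary~\ref{regstdmodofmt}) that is reused repeatedly in the later classification of maximal tori by type and in the normalizer computations.
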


\begin{proof}
  It follows from Propositions \ref{maximaldim}, \ref{sscaa} and Lemma \ref{regimbedstd}.
\end{proof}

\begin{prop}\label{mtinsl}
  If $p = 2$ then $\mathfrak{sl}(2,\FK)$ is nilpotent.
  If $p > 2$ or $n > 2$ then every maximal torus in $\slnK$ is self-centralizing.
\end{prop}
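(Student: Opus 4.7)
The plan is to treat the two assertions separately.

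For the nilpotency claim when $p = 2$ and $n = 2$, I would argue by direct computation on the basis $e = E_{12}$, $f = E_{21}$, $h = E_{11} - E_{22}$ of $\mathfrak{sl}(2, \FK)$. Since $-1 = 1$ in characteristic two, $h$ equals the identity matrix and is therefore central in $\glnK$, while the only remaining non-trivial bracket is $[e, f] = h$. Hence $[\mathfrak{sl}(2,\FK), \mathfrak{sl}(2,\FK)] = \FK h$ lies in the centre of $\mathfrak{sl}(2,\FK)$, so the algebra is nilpotent of class at most two.

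For the second assertion I would follow the proof template of Proposition \ref{mtingl}. By Proposition \ref{maximaldim} a maximal torus $\mathfrak{T} \subseteq \slnK$ has dimension $n - 1$; combining Proposition \ref{sscaa}, Lemma \ref{regimbedstd}, and the fact that centralizers are compatible with scalar extension, the problem reduces to showing $C_{\slnK}(\mathfrak{D}_0) = \mathfrak{D}_0$ for the standard diagonal torus $\mathfrak{D}_0$. Given $X = (x_{ij}) \in \slnK$ centralising $\mathfrak{D}_0$, the identity $[X, H] = 0$ for $H = \mathrm{diag}(h_1, \ldots, h_n) \in \mathfrak{D}_0$ amounts to $x_{ij}(h_j - h_i) = 0$ for every $(i, j)$; so the remaining task is a separation step, producing, for each $i \neq j$, an $H \in \mathfrak{D}_0$ with $h_i \neq h_j$.

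This separation is the one genuinely delicate point and is exactly where the hypothesis $p > 2$ or $n > 2$ is used. If $p > 2$, the textbook choice $h_i = 1$, $h_j = -1$, remaining entries zero, has trace zero and $h_i - h_j = 2 \neq 0$. If $p = 2$ and $n > 2$, I would instead pick a third index $k \notin \{i, j\}$ and take $h_i = h_k = 1$ with the remaining entries zero: this has trace $2 = 0$ in characteristic two yet $h_i - h_j = 1$. The trick breaks down precisely when $n = p = 2$, because then the trace-zero condition collapses $\mathfrak{D}_0$ onto the scalar line $\FK \cdot I$ and no separation is possible --- a phenomenon consistent with the nilpotency statement in the first half of the proposition. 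With separation in hand, every off-diagonal $x_{ij}$ must vanish, and the trace-zero condition places $X$ in $\mathfrak{D}_0$.
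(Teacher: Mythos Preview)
Your argument for the first assertion is correct and matches the paper's computation exactly (the paper writes the basis as $\{I_2, E_{12}, E_{21}\}$, which is your $\{h, e, f\}$).

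For the second assertion your approach is correct but genuinely different from the paper's, and one step of your justification is misstated. The reduction to $\mathfrak{D}_0$ does not come from Proposition~\ref{sscaa} or Lemma~\ref{regimbedstd}: those results are about maximal tori in $\glnK$, and a maximal torus in $\slnK$ is typically not closed under the matrix product (e.g.\ $\mathrm{diag}(1,-1,0,\ldots,0)^2$ has trace $2$), so Proposition~\ref{sscaa} does not even apply to your $\mathfrak{T}$. What actually makes the reduction work is simpler: extend scalars to $\Fk$, simultaneously diagonalise the commuting semisimple elements of $\mathfrak{T}$ by some $U\in\GLnk$, and use the dimension count from Proposition~\ref{maximaldim} to conclude $U^{-1}\overline{\mathfrak{T}}U=\overline{\mathfrak{D}_0}$; then invoke compatibility of centralisers with base change and conjugation. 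With this fix, your separation argument inside $\mathfrak{D}_0$ is clean and correct, and it is exactly here that the hypothesis $p>2$ or $n>2$ is visibly used.

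The paper takes a different route, proving the second assertion as Corollary~\ref{slselfcentral}. It first constructs (Lemmas~\ref{bijglslpndvn} and~\ref{bijglslpdvn}) a bijection between maximal tori in $\glnK$ and in $\slnK$: given $\mathfrak{M}_0$ one takes $\mathfrak{M}=\FK I_n\oplus\mathfrak{M}_0$ if $p\nmid n$, and $\mathfrak{M}=\langle\mathfrak{M}_0\rangle$ (the associative algebra generated by $\mathfrak{M}_0$) if $p\mid n$; the hypothesis $p>2$ or $n>2$ enters in showing that $\langle\mathfrak{M}_0\rangle$ has dimension $n$. Then any $X$ centralising $\mathfrak{M}_0$ centralises $\mathfrak{M}$, hence lies in $\mathfrak{M}$ by Proposition~\ref{mtingl}, hence in $\mathfrak{M}\cap\slnK=\mathfrak{M}_0$. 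The paper's route is less direct for this one statement, but the bijection it establishes is reused to transfer the count of maximal tori from $\glnq$ to $\slnq$ (Corollary~\ref{numofmtsl}); your argument reaches the self-centralising conclusion faster but does not yield that bijection.
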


\begin{proof}
  If $\mathrm{char}\,\FK = 2$ then a basis for $\mathfrak{sl}(2,\FK)$ is $\{ \left(\begin{smallmatrix} 1 & 0 \\ 0 & 1 \end{smallmatrix}\right), \left(\begin{smallmatrix} 0 & 1 \\ 0 & 0 \end{smallmatrix}\right), \left(\begin{smallmatrix} 0 & 0 \\ 1 & 0 \end{smallmatrix}\right) \}$.
  Since $[\mathfrak{sl}(2,\FK), \mathfrak{sl}(2,\FK)] = \FK \left(\begin{smallmatrix} 1 & 0 \\ 0 & 1 \end{smallmatrix}\right)$ and $[\mathfrak{sl}(2,\FK), \FK \left(\begin{smallmatrix} 1 & 0 \\ 0 & 1 \end{smallmatrix}\right)] = 0$, $\mathfrak{sl}(2,\FK)$ is nilpotent.
  The second assertion follows from Corollary \ref{slselfcentral}.
\end{proof}

Therefore the maximal tori in $\glnK$ or $\slnK$ are precisely the CSAs by Theorem \ref{winterthm} and Propositions \ref{mtingl}, \ref{mtinsl} unless $p = n = 2$, in which case, we know that $\mathfrak{sl}(2, \FK)$ is itself a CSA.

In section 2 we prepare and study basic structures of maximal tori in $\glnK$.
In section 3 we finally gain the number of all maximal tori in $\glnK$ when $\FK$ is an arbitrary finite field.
This result follows from a consideration about the conjugacy classes under $\GLnK$.
In section 4 we consider the case in $\slnK$.
In particular we also obtain the number of all maximal tori in $\slnK$ when $\FK$ is a finite field.

\section{Basic study in maximal tori in $\glnK$}

\noindent In this section we denote by $\mathfrak{M}$ a torus which is \emph{maximal} in $\glnK$.

\begin{prop}\label{closedmp}
  The following hold for a maximal torus in $\glnK$.
  \begin{enumerate}
  \item $\mathfrak{M}$ is closed under the matrix product;
  \item $I_n \in \mathfrak{M}$ where $I_n$ is the $n \times n$ identity matrix.
  \end{enumerate}
\end{prop}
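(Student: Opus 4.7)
The plan is to exploit the simultaneous diagonalization already implicit in the proof of Proposition \ref{maximaldim} and then descend from the algebraic closure back to $\FK$. First, since $\mathfrak{M}$ is maximal, that proposition gives $\DimK{\mathfrak{M}}=n$, whence $\Dimk{\overline{\mathfrak{M}}}=n$ after scalar extension. The proof of Proposition \ref{maximaldim} produces $U\in\GLnk$ with $U^{-1}\overline{\mathfrak{M}}U\subseteq\overline{\mathfrak{D}}$, and since both $\Fk$-spaces have dimension $n$ this inclusion is forced to be an equality. Because $\overline{\mathfrak{D}}$ trivially contains $I_n$ and is closed under matrix product, conjugating back by $U$ immediately transfers both properties to $\overline{\mathfrak{M}}$.

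The remaining step, and the main (mild) technical obstacle, is to descend these two properties from $\overline{\mathfrak{M}}$ to $\mathfrak{M}$ itself. For this I would establish the elementary identity $\overline{\mathfrak{M}}\cap\glnK=\mathfrak{M}$ by extending a $\FK$-basis of $\mathfrak{M}$ to a $\FK$-basis of $\glnK$: the same collection is an $\Fk$-basis of $\overline{\glnK}$, and $\overline{\mathfrak{M}}$ is exactly the $\Fk$-span of the first $n$ of these vectors, so an element of $\overline{\mathfrak{M}}$ whose entries lie in $\FK$ must already have its coefficients in $\FK$. Applying this observation to $I_n$ yields (ii), and applying it to $XY$ for any $X,Y\in\mathfrak{M}$—which lies in $\overline{\mathfrak{M}}$ by the preceding paragraph and evidently in $\glnK$—yields (i).

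As an alternative route that avoids scalar extension altogether, one could let $\mathfrak{M}'$ denote the $\FK$-subalgebra of $\glnK$ under ordinary matrix multiplication generated by $\mathfrak{M}\cup\{I_n\}$ and verify directly that $\mathfrak{M}'$ again satisfies (T1)--(T3): commutativity propagates to all products and sums; closure under the $p$-map follows from $(XY)^p=X^pY^p$ in characteristic $p$ for commuting $X,Y$; and absolute semisimplicity follows because a commuting family of $\Fk$-diagonalizable matrices is simultaneously $\Fk$-diagonalizable, so every $\FK$-polynomial in elements of $\mathfrak{M}\cup\{I_n\}$ is diagonal in the same basis. Maximality of $\mathfrak{M}$ then forces $\mathfrak{M}'=\mathfrak{M}$, delivering both conclusions at once.
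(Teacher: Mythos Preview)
Your proposal is correct. The paper's own argument is essentially your ``alternative route'': for $X,Y\in\mathfrak{M}$ it notes that $XY$ is absolutely semisimple (via simultaneous diagonalization of the commuting pair over $\Fk$) and commutes with every element of $\mathfrak{M}$, so maximality forces $\mathfrak{M}+\sum_{i\geq 0}\FK(XY)^{p^i}=\mathfrak{M}$; the same one-line observation handles $I_n$.

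Your primary route via scalar extension is a genuinely different path. It leans on the equality $\DimK{\mathfrak{M}}=n$ from Proposition~\ref{maximaldim} (and hence on the Winter citation used in its converse direction) to upgrade $U^{-1}\overline{\mathfrak{M}}U\subseteq\overline{\mathfrak{D}}$ to an equality, and then descends through the identity $\overline{\mathfrak{M}}\cap\glnK=\mathfrak{M}$. This buys a pleasant structural picture---$\mathfrak{M}$ is literally a $\Fk$-conjugate of the diagonal torus---but at the price of invoking an external reference that the paper's direct maximality argument does not need. The paper's approach (your second route) is thus the more self-contained of the two, using only (T1)--(T3) and maximality.
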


\begin{proof}
  (i)
  Let $X$ and $Y$ in $\mathfrak{M}$.
  Since $XY = YX$, $X$ and $Y$ are simultaneously diagonalizable over $\Fk$.
  So $XY$ is absolutely semisimple.
  Since $XY$ commutes with all matrices in $\mathfrak{M}$ the maximality of $\mathfrak{M}$ implies $\mathfrak{M} + \sum_{i=0}^{\infty} \FK (XY)^{p^i} = \mathfrak{M}$, i.e., $XY \in \mathfrak{M}$.
  (ii)
  Since $I_n$ is diagonal and commutes with all matrices in $\mathfrak{M}$ the maximality yields $I_n \in \mathfrak{M}$.
\end{proof}

\begin{prop}\label{sscaa}
  $\mathfrak{M}$ is a semisimple commutative algebra over $\FK$.
\end{prop}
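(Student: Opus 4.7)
The plan is to establish three things in sequence: that $\mathfrak{M}$ carries an associative unital $\FK$-algebra structure, that this algebra is commutative, and that its Jacobson radical is trivial.

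First I would assemble the algebra structure from Proposition \ref{closedmp}. By hypothesis, $\mathfrak{M}$ is already a $\FK$-subspace of $\glnK$; by part (i) of \ref{closedmp} it is closed under matrix multiplication, and by part (ii) it contains $I_n$. Associativity of multiplication is inherited from $\glnK$, so $\mathfrak{M}$ is an associative unital $\FK$-algebra. Commutativity is then immediate from axiom (T2), since $[X,Y]=0$ for all $X,Y\in\mathfrak{M}$ is the same as $XY=YX$.

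For semisimplicity, I would argue via the nilradical. Because $\mathfrak{M}$ is a finite-dimensional commutative $\FK$-algebra, it is Artinian, and for commutative Artinian rings the Jacobson radical coincides with the nilradical. It therefore suffices to show that $\mathfrak{M}$ contains no nonzero nilpotent element. Suppose $X\in\mathfrak{M}$ satisfies $X^m=0$ for some positive integer $m$. By (T3), $X$ is diagonalizable over $\Fk$; but a diagonalizable nilpotent matrix is necessarily zero, so $X=0$. Hence the nilradical vanishes, $\mathfrak{M}$ is semisimple, and (being commutative and semisimple Artinian) is in fact a finite product of field extensions of $\FK$.

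The argument is essentially routine once Proposition \ref{closedmp} is in hand; I do not anticipate a real obstacle. The only place one must be slightly careful is to avoid conflating ``semisimple'' with ``étale'' or ``absolutely semisimple'': the claim is only that $\mathfrak{M}$ is semisimple as a commutative $\FK$-algebra, i.e.\ has trivial Jacobson radical, and for this the elementary nilradical argument above is exactly sufficient; no scalar extension to $\Fk$ is needed.
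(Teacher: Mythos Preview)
Your proof is correct and follows essentially the same route as the paper: invoke Proposition~\ref{closedmp} for the associative unital algebra structure, use (T2) for commutativity, and use (T3) to rule out nonzero nilpotents and hence obtain semisimplicity. The paper's version is terser---it simply asserts that the absence of nonzero nilpotent matrices forces semisimplicity---whereas you spell out the Artinian/nilradical justification, but the argument is the same.
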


\begin{proof}
  By Proposition \ref{closedmp}, $\mathfrak{M}$ is a commutative associative $\FK$-algebra.
  Since $\mathfrak{M}$ has no nonzero nilpotent matrix $\mathfrak{M}$ is semisimple.
\end{proof}

\begin{cor}\label{dsofefs}
  We have $\mathfrak{M} \cong \FF_1 \oplus \cdots \oplus \FF_t$ as associative $\FK$-algebras where $\FF_i$ is a separable extension field of $\FK$ for each $i \in \{1,\ldots,t\}$.
\end{cor}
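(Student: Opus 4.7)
The plan has two stages. First, I would extract a ring-theoretic decomposition of $\mathfrak{M}$ from Proposition \ref{sscaa}; second, I would promote it to a decomposition into separable extensions using axiom (T3).

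For the first stage, $\mathfrak{M}$ is a finite-dimensional semisimple commutative associative $\FK$-algebra with unit $I_n$ (by Propositions \ref{closedmp} and \ref{sscaa}). The commutative case of Wedderburn--Artin---equivalently, the structure theorem for semisimple commutative Artinian rings---then yields an isomorphism $\mathfrak{M} \cong \FF_1 \oplus \cdots \oplus \FF_t$, with each $\FF_i$ a finite field extension of $\FK$ (realized as a quotient of $\mathfrak{M}$ by a maximal ideal, or equivalently via the primitive orthogonal idempotents). The embedding $\FK \hookrightarrow \FF_i$ is induced by $c \mapsto c \cdot I_n$, which is injective because $I_n \neq 0$.

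For separability, the natural approach is to base-change to the algebraic closure $\Fk$. Set $\overline{\mathfrak{M}} := \Fk \otimes_\FK \mathfrak{M}$. The proof of Proposition \ref{maximaldim} already exhibits $U \in \GLnk$ with $U^{-1}\overline{\mathfrak{M}}U \subseteq \overline{\mathfrak{D}}$, so $\overline{\mathfrak{M}}$ embeds as a $\Fk$-subalgebra of $\overline{\mathfrak{D}} \cong \Fk^n$ and is therefore reduced. Because $\overline{\mathfrak{M}} \cong \bigoplus_i (\FF_i \otimes_\FK \Fk)$ by flatness of the scalar extension, each factor $\FF_i \otimes_\FK \Fk$ is reduced, which is precisely the standard criterion for the finite extension $\FF_i/\FK$ to be separable.

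The main delicate point is invoking the separability criterion (a finite extension is separable iff its base-change to an algebraic closure is reduced). If one prefers to avoid citing it, an elementwise alternative works: any $\alpha \in \FF_i$ is represented by a matrix $X \in \mathfrak{M}$, whose minimal polynomial over $\FK$ is separable by absolute semisimplicity (T3); since the minimal polynomial of $\alpha$ divides that of $X$, it is separable, so $\FF_i$ is generated over $\FK$ by separable elements, hence is a separable extension.
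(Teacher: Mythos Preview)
Your proposal is correct. The first stage matches the paper exactly: Proposition~\ref{sscaa} plus Wedderburn gives the decomposition into finite field extensions.

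For separability, the paper takes precisely your elementwise alternative: since each matrix in $\mathfrak{M}$ is diagonalizable over $\Fk$, its minimal polynomial over $\FK$ is separable, and hence so is the minimal polynomial of any element of $\FF_i$. Your primary route via base change is a genuinely different (though standard) argument: you use the simultaneous diagonalization from Proposition~\ref{maximaldim} to see that $\overline{\mathfrak{M}}$ is reduced, then apply the criterion that $\FF_i/\FK$ is separable iff $\FF_i \otimes_\FK \Fk$ is reduced. Both arguments are short; the paper's is slightly more self-contained in that it avoids citing the tensor-product criterion, while yours is more structural and makes the role of (T3) uniform across the whole algebra rather than element by element.
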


\begin{proof}
  By Proposition \ref{sscaa} and the Wedderburn structure theorem we know $\mathfrak{M} \cong \FF_1 \oplus \cdots \oplus \FF_t$ for some extension fields $\FF_1, \ldots, \FF_t$ of $\FK$.
  Since every element of $\mathfrak{M}$ is diagonalizable the minimal polynomial over $\FK$ of an element of $\FF_i$ must be separable, i.e., $\FF_i$ is separable over $\FK$.
\end{proof}

Let $\varphi:\FF_1 \oplus \cdots \oplus \FF_t \rightarrow \mathfrak{M}$ be an isomorphism.
Put $E_i := \varphi(1_{\FF_i})$ for each $i \in \{1,\ldots,t\}$.
Then $E_1, \ldots, E_t$ are orthogonal primitive idempotents with $I_n = \sum E_i$.
Since these are uniquely determined by $\mathfrak{M}$ so are $n_i := \Dim{\mathfrak{M}E_i} = \DimK{\FF_i}$.
We may assume that $n_1 \geq \cdots \geq n_t$.

\begin{defn}\label{mttype}
  The sequence $(n_1, \ldots, n_t)$ is called the \textit{type} of $\mathfrak{M}$.
  \begin{enumerate}
  \item $\mathfrak{M}$ is called \textit{division} if $\mathfrak{M}$ has type $(n)$.
  \item $\mathfrak{M}$ is called \textit{split} if $\mathfrak{M}$ has type $(1, \ldots, 1)$;
  \end{enumerate}
\end{defn}

\begin{defn}\label{equivmod}
  Let $A_1$ and $A_2$ be associative $\FK$-algebras.
  Let $M_1$ be an $A_1$-module and $M_2$ an $A_2$-module.
  We say that $M_1$ is \textit{equivalent} to $M_2$ if there exist an isomorphism $f : A_1 \rightarrow A_2$ and a $\FK$-linear bijection $g : M_1 \rightarrow M_2$ such that $g(am) = f(a)g(m)$ for all $a \in A_1$ and $m \in M_1$.
\end{defn}

Evidently, the regular modules for $\FF_1 \oplus \cdots \oplus \FF_t$ and $\mathfrak{M}$ are equivalent via the map $\varphi$.
Moreover we know the following fundamental fact:

\begin{lem}\label{regimbedstd}
  Let $\mathfrak{A}\subseteq \mathrm{Mat}(n,\FK)$ be a semisimple commutative algebra over $\FK$ with $I_n$.
  Then there is an $\mathfrak{A}$-module monomorphism $\mathfrak{A} \hookrightarrow \FK^n$.
\end{lem}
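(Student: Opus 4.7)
The plan is to split $\mathfrak{A}$ by its Wedderburn decomposition, split $\FK^n$ correspondingly using the resulting orthogonal idempotents, and then embed each simple summand of $\mathfrak{A}$ into the matching piece of $\FK^n$.

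Concretely, by the Wedderburn structure theorem there is a $\FK$-algebra isomorphism $\varphi:\FF_1\oplus\cdots\oplus\FF_t\xrightarrow{\sim}\mathfrak{A}$ with each $\FF_i$ a field extension of $\FK$. Set $E_i:=\varphi(1_{\FF_i})$; then $E_1,\ldots,E_t$ are nonzero pairwise orthogonal idempotents of $\mathrm{Mat}(n,\FK)$ summing to $I_n$, and $E_i\mathfrak{A}\cong\FF_i$. This produces a decomposition $\FK^n=\bigoplus_{i=1}^t V_i$, where $V_i:=E_i\FK^n$. Commutativity of $\mathfrak{A}$ together with the orthogonality of the $E_j$'s makes each $V_i$ an $\mathfrak{A}$-submodule on which $\mathfrak{A}$ acts through its quotient $\mathfrak{A}\twoheadrightarrow E_i\mathfrak{A}\cong\FF_i$; in particular $V_i$ inherits the structure of an $\FF_i$-vector space.

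The central observation is that each $V_i$ is nonzero. Since $E_i$ is a nonzero idempotent in $\mathrm{Mat}(n,\FK)$ its image $V_i=E_i\FK^n$ is nonzero. Picking any $v_i\in V_i\setminus\{0\}$, let $\iota_i:\FF_i\to V_i$ be the $\FF_i$-linear map $a\mapsto a\cdot v_i$; this is nonzero, and since $\FF_i$ is a field it is automatically injective. Assembling, $\iota:=\bigoplus_i\iota_i:\bigoplus_i\FF_i\to\bigoplus_iV_i=\FK^n$ is an injective $\FK$-linear map, and composing with $\varphi^{-1}$ yields the desired monomorphism $\mathfrak{A}\hookrightarrow\FK^n$. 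It is $\mathfrak{A}$-linear because on each summand both the source and the target receive the action of $\mathfrak{A}$ through the same projection $\mathfrak{A}\twoheadrightarrow\FF_i$.

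There is no substantial obstacle here; the only point requiring a moment's care is the verification that every $V_i$ is nonzero, which reduces to the trivial fact that a nonzero idempotent matrix has nonzero image. The remainder is the standard principle that, once one has the Wedderburn idempotents in hand, all module-theoretic questions about $\mathfrak{A}$ factor componentwise, and each $\FF_i$-component embeds into its isotypic piece as soon as that piece is nontrivial.
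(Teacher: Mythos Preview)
Your proof is correct and follows essentially the same route as the paper's: both decompose $\mathfrak{A}$ via Wedderburn into its simple (field) components, extract the orthogonal primitive idempotents, pick a nonzero vector in each idempotent's image in $\FK^n$, and use simplicity to conclude that the resulting componentwise map is injective. The only difference is cosmetic---the paper works directly with the minimal ideals $\mathfrak{I}_i\subseteq\mathfrak{A}$ rather than passing through the abstract isomorphism $\varphi$---but the argument is the same.
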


\begin{proof}
  Since $\mathfrak{A}$ is semisimple, $\mathfrak{A} = \mathfrak{I}_1 \oplus \cdots \oplus \mathfrak{I}_t$ where $\mathfrak{I}_i$'s are irreducible $\mathfrak{A}$-modules, i.e., minimal left ideals of $\mathfrak{A}$.
  Since $\mathfrak{A}$ is commutative, $\mathfrak{I}_i$'s are indeed two-sided ideals of $\mathfrak{A}$.
  Therefore if $I_n = A_1 + \cdots + A_t$ for some $A_i \in \mathfrak{I}_i$ then $A_1,\ldots,A_t$ are orthogonal primitive idempotents in $\mathfrak{A}$.
  Since $A_i \neq 0$ we can choose $v_i \in \FK^n$ such that $A_i v_i \neq 0$.
  If we define $\psi_i : \mathfrak{I}_i \rightarrow A_i \FK^n$ to be an $\mathfrak{A}$-module homomorphism generated by $A_i \mapsto A_iv_i$ then $\psi_i$ is injective whence so is $\psi_1 \oplus \cdots \oplus \psi_t : \mathfrak{A} \rightarrow \FK^n$.
\end{proof}

\begin{cor}\label{regstdmodofmt}
  The regular module for $\mathfrak{M}$ is isomorphic to the standard module $\FK^n$ for $\mathfrak{M}$ (defined from the inclusion $\mathfrak{M} \subseteq \mathrm{Mat}(n, \FK)$).
\end{cor}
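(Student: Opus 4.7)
The plan is very short: combine the lemma just proved with a dimension count. First I would invoke Proposition \ref{closedmp} and Proposition \ref{sscaa} to note that $\mathfrak{M}$ is a semisimple commutative $\FK$-subalgebra of $\mathrm{Mat}(n,\FK)$ containing $I_n$, so that the hypotheses of Lemma \ref{regimbedstd} are satisfied. Applying the lemma gives an $\mathfrak{M}$-module monomorphism $\mathfrak{M} \hookrightarrow \FK^n$, where the right-hand side is regarded as the standard module via the inclusion $\mathfrak{M} \subseteq \mathrm{Mat}(n,\FK)$, and the left-hand side is the regular module.

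Next I would pin down the dimensions on both sides. Since $\mathfrak{M}$ is a maximal torus in $\glnK$, Proposition \ref{maximaldim} gives $\DimK{\mathfrak{M}} = n = \DimK{\FK^n}$. Hence any injective $\FK$-linear (in particular, $\mathfrak{M}$-module) map $\mathfrak{M} \to \FK^n$ is automatically surjective, so the embedding of Lemma \ref{regimbedstd} is an isomorphism of $\mathfrak{M}$-modules. This yields the claim.

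There is no real obstacle here; the lemma does all the work, and the only additional input is equality of dimensions, which is immediate from maximality. The mild conceptual point to check is merely that the module structure on $\FK^n$ used in Lemma \ref{regimbedstd} is the same as the standard one coming from $\mathfrak{M} \subseteq \mathrm{Mat}(n, \FK)$, but that is clear from the construction in the proof of the lemma.
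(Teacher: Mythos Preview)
Your proposal is correct and follows exactly the paper's own argument: apply Lemma~\ref{regimbedstd} (whose hypotheses hold by Propositions~\ref{closedmp} and~\ref{sscaa}) to get an injection, then use $\DimK{\mathfrak{M}} = n$ from Proposition~\ref{maximaldim} to conclude it is an isomorphism. The paper's proof is a one-line compression of precisely what you wrote.
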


\begin{proof}
  Since $\Dim{\mathfrak{M}} = \Dim{\FK^n}$ we are done by Lemma \ref{regimbedstd}.
\end{proof}

The regular module for $\FF_1 \oplus \cdots \oplus \FF_t$ gives rise to an equivalent module $\FK^n$:
If $\{ \xi^{(k)}_1 , \ldots, \xi^{(k)}_{n_k} \}$ is a $\FK$-basis of $\FF_k$ for each $k \in \{1,\ldots, t\}$ then we define a (faithful) regular representation $\rho_k : \FF_k \rightarrow \mathrm{Mat}(n_k, \FK)$ by $\rho_k(\eta) := (\alpha_{i,j}(\eta))$ where $\eta \xi^{(k)}_j = \sum_{i=1}^{n_k} \alpha_{i,j}(\eta) \xi^{(k)}_i$ for $j \in \{1,\ldots,n_k \}$.
Since $\rho_1(\FF_1) \oplus \cdots \oplus \rho_t(\FF_t)$ can be regarded as a direct sum in $\mathrm{Mat}(n, \FK)$ we obtain an equivalent module $\FK^n$ for $\rho_1(\FF_1) \oplus \cdots \oplus \rho_t(\FF_t)$ via the correspondence $\{\xi^{(k)}_j \mid k \in \{ 1, \ldots, t \},\, j \in \{1, \ldots, n_k \} \} \leftrightarrow \{ e_1, \ldots, e_n \}$ where $e_i \in \FK^n$ is the standard unit vector in $\FK^n$ for each $i \in \{ 1,\ldots,n\}$.

\begin{prop}\label{conjugatebd}
  We have $U^{-1}\mathfrak{M}U = \rho_1(\FF_1) \oplus \cdots \oplus \rho_t(\FF_t)$ for some $U \in \GLnK$ and regular representations $\rho_i : \FF_i \rightarrow \mathrm{Mat}(n_i,\FK)$.
\end{prop}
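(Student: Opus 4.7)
The plan is to chain together the module equivalences already assembled in the excerpt and then translate the resulting $\FK$-linear bijection into conjugation by an invertible matrix. I would proceed in three steps. First, by Corollary \ref{regstdmodofmt}, the standard $\mathfrak{M}$-module $\FK^n$ (coming from the inclusion $\mathfrak{M} \subseteq \mathrm{Mat}(n,\FK)$) is isomorphic to the regular $\mathfrak{M}$-module. Second, via the algebra isomorphism $\varphi : \FF_1 \oplus \cdots \oplus \FF_t \to \mathfrak{M}$ of Corollary \ref{dsofefs}, the regular $\mathfrak{M}$-module is equivalent (in the sense of Definition \ref{equivmod}) to the regular module of $\FF_1 \oplus \cdots \oplus \FF_t$. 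Third, I would fix $\FK$-bases $\{\xi^{(k)}_1,\ldots,\xi^{(k)}_{n_k}\}$ of each $\FF_k$ and let $\rho_k : \FF_k \to \mathrm{Mat}(n_k, \FK)$ be the resulting regular representation exactly as constructed in the paragraph preceding the statement; by that construction the regular module of $\FF_1 \oplus \cdots \oplus \FF_t$ is equivalent to the standard module of $\rho_1(\FF_1) \oplus \cdots \oplus \rho_t(\FF_t)$.

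Composing these three equivalences yields an equivalence between the standard $\mathfrak{M}$-module $\FK^n$ and the standard $(\rho_1(\FF_1)\oplus\cdots\oplus\rho_t(\FF_t))$-module $\FK^n$. Unpacking Definition \ref{equivmod}, the data consists of an $\FK$-algebra isomorphism $f : \mathfrak{M} \to \rho_1(\FF_1)\oplus\cdots\oplus\rho_t(\FF_t)$ together with a $\FK$-linear bijection $g : \FK^n \to \FK^n$ satisfying $g(Xv) = f(X)g(v)$ for all $X \in \mathfrak{M}$ and $v \in \FK^n$. Representing $g$ as left multiplication by a matrix $V \in \GLnK$, the intertwining relation becomes $V X V^{-1} = f(X)$ for every $X \in \mathfrak{M}$, so that $V\mathfrak{M}V^{-1} = \rho_1(\FF_1)\oplus\cdots\oplus\rho_t(\FF_t)$. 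Setting $U := V^{-1}$ gives $U^{-1}\mathfrak{M}U = \rho_1(\FF_1)\oplus\cdots\oplus\rho_t(\FF_t)$, as required.

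The argument is not conceptually delicate; the only real obstacle is bookkeeping. One must make sure that each $\rho_i$ is a \emph{regular} representation in the precise sense introduced before the statement (attached to a specific $\FK$-basis of $\FF_i$), that the direct sum $\rho_1(\FF_1) \oplus \cdots \oplus \rho_t(\FF_t)$ is interpreted as block-diagonal matrices in $\mathrm{Mat}(n,\FK)$ of block sizes $n_1,\ldots,n_t$ with $\sum n_i = n$, and that the direction of conjugation (the inversion passing from $g$ to $U$) is handled consistently so that the final relation reads $U^{-1}\mathfrak{M}U$ rather than $U\mathfrak{M}U^{-1}$. Beyond these checks, the proof uses nothing more than what has already been established in Corollaries \ref{dsofefs} and \ref{regstdmodofmt} and the basis-dependent construction of $\rho_k$.
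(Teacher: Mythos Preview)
Your proof is correct and follows exactly the paper's approach: the paper's proof is the single sentence ``From Corollary \ref{regstdmodofmt} and the preceding argument we conclude that the modules $\FK^n$ for $\mathfrak{M}$ and $\rho_1(\FF_1) \oplus \cdots \oplus \rho_t(\FF_t)$ are equivalent,'' and you have simply unpacked this chain of equivalences and made explicit the passage from a module equivalence to conjugation by an invertible matrix.
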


\begin{proof}
  From Corollary \ref{regstdmodofmt} and the preceding argument we conclude that the modules $\FK^n$ for $\mathfrak{M}$ and $\rho_1(\FF_1) \oplus \cdots \oplus \rho_t(\FF_t)$ are equivalent.
\end{proof}

\section{Maximal tori in $\glnK$ when $\FK$ is finite}

\noindent Our goal of this section is to obtain the number of all maximal tori (or Cartan subalgebras) in $\glnK$ when $\FK$ is a finite field with $\mathrm{char}\,\FK = p$.

Throughout this section we put $q := \Pl$ and assume $\FK = \Fq$, that is, a finite field with $\Pl$ elements.
We use the symbol $\mathfrak{M}$ as a maximal torus in $\glnK$.
Also we shall frequently see the following notations:
\begin{enumerate}
\item $\mathrm{Cl}(\mathfrak{M}) := \{ U^{-1}\mathfrak{M}U \mid U \in \GLnq \}$;
\item $\mathrm{N}(\mathfrak{M}) := \{ U \in \GLnq \mid U^{-1}\mathfrak{M}U = \mathfrak{M}\}$.
\end{enumerate}

\begin{prop}\label{numconjugacy}
  The type of $\mathfrak{M}$ determines the conjugacy class $\mathrm{Cl}(\mathfrak{M})$.
  The number of all conjugacy classes is the partition number of $n$.
\end{prop}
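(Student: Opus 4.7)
The plan is to show that the type is a complete $\GLnq$-conjugacy invariant for maximal tori in $\glnq$, taking values precisely on the partitions of $n$; the count then reduces to counting those partitions. By Corollary~\ref{dsofefs} and Proposition~\ref{maximaldim}, the type $(n_1,\ldots,n_t)$ of $\mathfrak{M}$ satisfies $n_1+\cdots+n_t=n$, so it is a partition of $n$. Conjugation by any $U\in\GLnq$ induces an $\Fq$-algebra isomorphism $\mathfrak{M}\to U^{-1}\mathfrak{M}U$, which carries the Wedderburn decomposition of $\mathfrak{M}$ to that of its conjugate; hence the type descends to a well-defined map from $\GLnq$-conjugacy classes of maximal tori to partitions of $n$.

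For injectivity, suppose $\mathfrak{M}$ and $\mathfrak{M}'$ both have type $(n_1,\ldots,n_t)$. By Proposition~\ref{conjugatebd}, after replacing each by a $\GLnq$-conjugate one may assume $\mathfrak{M}=\rho_1(\FF_1)\oplus\cdots\oplus\rho_t(\FF_t)$ and $\mathfrak{M}'=\rho'_1(\FF'_1)\oplus\cdots\oplus\rho'_t(\FF'_t)$ in block-diagonal form, where $\FF_i$ and $\FF'_i$ are separable extensions of $\Fq$ of the same degree $n_i$. Because $\Fq$ is finite, the uniqueness of finite fields gives $\FF_i\cong\FF'_i$ as $\Fq$-algebras; fixing such an identification, the images $\rho_i(\FF_i),\rho'_i(\FF'_i)\subseteq\mathrm{Mat}(n_i,\Fq)$ become two faithful $n_i$-dimensional $\Fq$-representations of the $n_i$-dimensional field $\FF_i$. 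Each makes $\Fq^{n_i}$ into a nonzero $\FF_i$-module whose $\Fq$-dimension equals that of $\FF_i$, hence isomorphic to the regular module; Lemma~\ref{regimbedstd} (applied in size $n_i$) therefore yields $V_i\in\mathrm{GL}(n_i,\Fq)$ with $V_i^{-1}\rho_i(\FF_i)V_i=\rho'_i(\FF'_i)$. Then $V:=\mathrm{diag}(V_1,\ldots,V_t)\in\GLnq$ satisfies $V^{-1}\mathfrak{M}V=\mathfrak{M}'$, and chaining this with the two conjugations that placed $\mathfrak{M}$ and $\mathfrak{M}'$ into block form shows the original tori are $\GLnq$-conjugate.

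For surjectivity, given a partition $(n_1,\ldots,n_t)$ of $n$, fix regular representations $\rho_i\colon\FF_{q^{n_i}}\hookrightarrow\mathrm{Mat}(n_i,\Fq)$ and form the block-diagonal subalgebra $\mathfrak{M}_\lambda:=\rho_1(\FF_{q^{n_1}})\oplus\cdots\oplus\rho_t(\FF_{q^{n_t}})\subseteq\mathrm{Mat}(n,\Fq)$. It is commutative, has $\Fq$-dimension $n$, is closed under the $p$-map since each $\rho_i$ is an $\Fq$-algebra homomorphism and Frobenius is an endomorphism of $\FF_{q^{n_i}}$, and every element has separable minimal polynomial over $\Fq$ (being multiplication inside a field), hence is diagonalizable over $\FFF$. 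By Proposition~\ref{maximaldim}, $\mathfrak{M}_\lambda$ is a maximal torus, and its type is $(n_1,\ldots,n_t)$ by construction. The map from conjugacy classes to partitions is therefore a bijection, proving both assertions. The main obstacle is the injectivity step: one must convert an abstract $\Fq$-algebra isomorphism $\FF_i\cong\FF'_i$ into an actual conjugation in $\GLnq$, which is handled by the uniqueness of the regular module for a field (Lemma~\ref{regimbedstd}) applied block by block.
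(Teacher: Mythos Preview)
Your argument is correct and follows the same route as the paper: reduce every maximal torus to a block-diagonal direct sum of regular field representations via Proposition~\ref{conjugatebd}, then use uniqueness of finite fields of a given degree to conclude that tori of equal type are conjugate. You are simply more explicit than the paper, which compresses the whole argument into one sentence; in particular you spell out (a) that the type is a conjugacy invariant, (b) the block-by-block conjugation via equivalence of the two regular-module structures on $\Fq^{n_i}$, and (c) surjectivity onto all partitions. One cosmetic point: the existence of $V_i$ really comes from the fact that any $n_i$-dimensional $\FF_i$-module over $\Fq$ is isomorphic to the regular one (modules over a field are free); invoking Lemma~\ref{regimbedstd} there is harmless but Corollary~\ref{regstdmodofmt}, or the bare vector-space-over-a-field argument, is the more natural citation.
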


\begin{proof}
  Any two extension fields of a finite field $\FK$ are isomorphic if the extension degrees coincide.
  So the first assertion is a consequence of Proposition \ref{conjugatebd}.
  The second assertion follows from the first.
\end{proof}

\begin{lem}\label{extremalone}
  If $\mathfrak{M}$ is division then $\abs{\mathrm{N}(\mathfrak{M})} = n(q^n - 1)$.
\end{lem}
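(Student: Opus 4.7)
The plan is to realize $\mathrm{N}(\mathfrak{M})$ as a group extension of $\mathrm{Gal}(\FF/\FK)$ by $\mathfrak{M}^\times$, where $\FF$ denotes the unique degree-$n$ extension of $\FK=\Fq$. Since $|\mathrm{N}(\mathfrak{M})|$ depends only on the conjugacy class of $\mathfrak{M}$, Proposition \ref{conjugatebd} lets me assume $\mathfrak{M}=\rho(\FF)$ for a regular representation $\rho:\FF\to\mathrm{Mat}(n,\FK)$ attached to some $\FK$-basis $\{\xi_1,\ldots,\xi_n\}$ of $\FF$. Conjugation by $U\in\mathrm{N}(\mathfrak{M})$ restricts to an $\FK$-algebra automorphism of $\mathfrak{M}$, yielding a homomorphism
\[
\Phi:\mathrm{N}(\mathfrak{M})\longrightarrow \mathrm{Aut}_{\FK\text{-alg}}(\mathfrak{M})\cong\mathrm{Gal}(\FF/\FK),
\]
whose codomain has order $n$; the claim will follow from $|\mathrm{N}(\mathfrak{M})|=|\KER\Phi|\cdot|\IM\Phi|$.

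For the kernel, $U\in\KER\Phi$ iff $U$ commutes with every element of $\mathfrak{M}$. Proposition \ref{mtingl} asserts that the centralizer of $\mathfrak{M}$ in $\glnq$ equals $\mathfrak{M}$ itself, and this coincides with the centralizer inside the associative algebra $\mathrm{Mat}(n,\FK)$ because $[X,Y]=0$ and $XY=YX$ are equivalent conditions. Hence $\KER\Phi=\mathfrak{M}\cap\GLnq$; since $\mathfrak{M}\cong\FF$ is a field of order $q^n$, the invertible elements form a group of cardinality $q^n-1$.

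For surjectivity I would exhibit, for each $\sigma\in\mathrm{Gal}(\FF/\FK)$, an explicit preimage. Since $\FK$ is the fixed field of the Galois action, $\sigma:\FF\to\FF$ is $\FK$-linear, so its matrix $U_\sigma$ in the basis $\{\xi_1,\ldots,\xi_n\}$ lies in $\GLnq$; the identity $\sigma(\alpha\xi_j)=\sigma(\alpha)\sigma(\xi_j)$ translates into $U_\sigma\rho(\alpha)=\rho(\sigma(\alpha))U_\sigma$, so $U_\sigma$ normalizes $\mathfrak{M}$ and $\Phi(U_\sigma)$ equals $\sigma^{\pm 1}$ under the identification $\mathfrak{M}\cong\FF$. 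Consequently $\Phi$ is surjective and $|\mathrm{N}(\mathfrak{M})|=n(q^n-1)$. The main subtlety lies in this last step; as an alternative, Skolem--Noether applied to the simple subalgebra $\mathfrak{M}\cong\FF$ of the central simple algebra $\mathrm{Mat}(n,\FK)$ lifts any $\FK$-algebra automorphism of $\mathfrak{M}$ to conjugation by an element of $\GLnq$, giving the same conclusion.
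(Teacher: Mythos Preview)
Your proof is correct and follows essentially the same route as the paper: both identify $\mathfrak{M}$ with the regular representation of $\FF_{q^n}$ and exhibit $\mathrm{N}(\mathfrak{M})$ as the product $\rho(\FF_{q^n}^\times)\cdot\mathrm{Aut}(\FF_{q^n}/\Fq)$ with trivial intersection. The only organizational difference is that the paper argues directly that each $\sigma\in\mathrm{N}(\mathfrak{M})$ factors uniquely as $\rho(\sigma(1))\varphi$ with $\varphi$ a field automorphism, whereas you package this as the kernel/image of a homomorphism $\Phi$ and invoke Proposition~\ref{mtingl} for the kernel (which is legitimate, since that proposition does not depend on the present lemma).
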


\begin{proof}
  We define $\rho : \FF_{q^n} \rightarrow \mathrm{End}(\FF_{q^n}/\Fq)$ by $\rho(\xi) : \eta \mapsto \xi\eta$ for all $\xi, \eta \in \FF_{q^n}$.
  Since $\mathfrak{M}$ is division and $\Fq^n \cong \FF_{q^n}$ as $\Fq$-vector spaces we may assume without loss of generality that $\mathfrak{M} = \rho(\FF_{q^n})$.
  Let $\sigma \in \mathrm{N}(\mathfrak{M})$.
  We claim that $\sigma = \rho(\omega)\varphi$ for a unique $\omega \in \FF_{q^n}$ and $\varphi \in \mathrm{Aut}(\FF_{q^n}/\Fq)$.
  Set $\omega := \sigma(1)$ and $\varphi := \rho(\omega)^{-1}\sigma = \rho(\omega^{-1})\sigma$.
  Then $\varphi(1) = 1$ and $\varphi^{-1}\mathfrak{M}\varphi = \mathfrak{M}$, i.e., $\varphi \in \mathrm{N}(\mathfrak{M})$.
  So, for each $\xi \in \FF_{q^n}$, there is $\xi' \in \FF_{q^n}$ such that $\varphi^{-1}\rho(\xi')\varphi = \rho(\xi)$, i.e., $\varphi(\xi\eta) = \xi' \varphi(\eta)$ for all $\eta \in \FF_{q^n}$.
  Thus $\xi' = \varphi(\xi)$ whence $\varphi(\xi\eta) = \varphi(\xi)\varphi(\eta)$, i.e., $\varphi \in \mathrm{Aut}(\FF_{q^n}/\Fq)$, as claimed.
  Conversely $\rho(\FF_{q^n}^{\times})\mathrm{Aut}(\FF_{q^n}/\Fq) \subseteq \mathrm{N}(\mathfrak{M})$.
  Since $\abs{\mathrm{Aut}(\FF_{q^n}/\Fq)} = n$, $\abs{\rho(\FF_{q^n}^{\times})} = q^n - 1$ and $\rho(\FF_{q^n}^{\times})\cap\mathrm{Aut}(\FF_{q^n}/\Fq) = \set{\mathrm{Id}}$ we are done.
\end{proof}

\begin{lem}\label{extremaltwo}
  If $\mathfrak{M}$ is split then $\abs{\mathrm{N}(\mathfrak{M})} = n!(q - 1)^n$.
\end{lem}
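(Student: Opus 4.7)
The plan is to reduce to the case of the standard diagonal torus and then identify the normalizer explicitly as the group of monomial matrices.

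First, I would use Proposition \ref{conjugatebd}: when $\mathfrak{M}$ is split, each $\FF_i$ coincides with $\Fq$, and its regular representation is the identity map $\Fq \to \mathrm{Mat}(1,\Fq)$, so $\rho_1(\FF_1) \oplus \cdots \oplus \rho_n(\FF_n) = \mathfrak{D}$. Hence $\mathfrak{M}$ is $\GLnq$-conjugate to the diagonal torus $\mathfrak{D}$. Since $U \mapsto V U V^{-1}$ is a bijection $\mathrm{N}(\mathfrak{M}) \to \mathrm{N}(V \mathfrak{M} V^{-1})$ for any fixed $V \in \GLnq$, it suffices to compute $\abs{\mathrm{N}(\mathfrak{D})}$.

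Next I would identify the primitive idempotents of $\mathfrak{D}$ as the matrices $E_1, \ldots, E_n$, where $E_i$ has a single $1$ in position $(i,i)$ and zeros elsewhere. Take $U \in \mathrm{N}(\mathfrak{D})$. Conjugation by $U$ is an $\Fq$-algebra automorphism of $\mathfrak{D}$, so by the uniqueness of primitive idempotents it permutes the $E_i$: there is a unique $\sigma \in S_n$ with $U^{-1} E_i U = E_{\sigma^{-1}(i)}$, equivalently $U E_{\sigma^{-1}(i)} = E_i U$. Applying the latter equation to the standard basis vector $e_{\sigma^{-1}(i)}$ (the unique standard vector on which $E_{\sigma^{-1}(i)}$ acts as the identity) yields $U e_{\sigma^{-1}(i)} \in \Fq\, e_i$, i.e.\ $U e_j = \lambda_j e_{\sigma(j)}$ for some $\lambda_j \in \Fq$. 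Invertibility of $U$ forces $\lambda_j \in \Fq^\times$, so $U$ is a monomial matrix encoded by the pair $(\sigma,(\lambda_1,\ldots,\lambda_n))$.

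Conversely, every monomial matrix clearly preserves $\mathfrak{D}$ under conjugation, so the assignment $U \mapsto (\sigma,(\lambda_1,\ldots,\lambda_n))$ is a bijection between $\mathrm{N}(\mathfrak{D})$ and $S_n \times (\Fq^\times)^n$. Counting gives $\abs{\mathrm{N}(\mathfrak{D})} = n!\,(q-1)^n$, as required. The only step where I expect to need care is checking that the identification $U e_j = \lambda_j e_{\sigma(j)}$ is stated with the correct direction of the permutation, so that both the forward and reverse assignments actually agree; this is a bookkeeping issue rather than a conceptual one.
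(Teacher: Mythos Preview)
Your proposal is correct and follows essentially the same route as the paper: reduce to $\mathfrak{M}=\mathfrak{D}$ and show that $\mathrm{N}(\mathfrak{D})$ is exactly the group of monomial matrices, then count. The only cosmetic difference is that the paper argues by a direct column comparison in the equation $D_iU=UD$, whereas you phrase the same computation as ``conjugation permutes the primitive idempotents $E_i$''; the underlying calculation and the final count $n!(q-1)^n$ are identical.
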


\begin{proof}
  We may assume that $\mathfrak{M} = \mathfrak{D}$.
  It is clear that $\mathfrak{D}^{\ast}\mathfrak{P} \subseteq \mathrm{N}(\mathfrak{D})$ where $\mathfrak{D}^{\ast} = \mathfrak{D} \cap \GLnq$ and $\mathfrak{P}$ is the set of all $n\times n$ permutation matrices.
  Conversely let $U = (\alpha_{i,j}) \in \mathrm{N}(\mathfrak{D})$.
  If we define $D_i := \mathrm{diag}(0, \ldots, 0, 1, 0, \ldots, 0)$ where $1$ is in the $i$th position then $D_i U = U D$ for some $D = \mathrm{diag}(\delta_1, \ldots, \delta_n)$.
  We have $(0, \ldots, 0, \alpha_{i,j}, 0, \ldots, 0)^{\mathrm{T}} = \delta_j (\alpha_{1,j}, \ldots, \alpha_{n,j})^{\mathrm{T}}$ by comparing the $j$th columns of both sides of $D_i U = U D$.
  Since $\alpha_{i,j_i} \neq 0$ for some $j_i$, we have $\delta_{j_i} = 1$ so that $\alpha_{k,j_i} = 0$ for all $k \neq i$.
  Since $U$ is invertible, $\alpha_{i,j} = 0$ for all $j \neq j_i$.
  Thus $U \in \mathfrak{D}^{\ast}\mathfrak{P}$.
  Since $\abs{\mathfrak{D}^{\ast}} = (q - 1)^n$, $\abs{\mathfrak{P}} = n!$ and $\mathfrak{D}^{\ast} \cap \mathfrak{P} = \set{I_n}$ we are done.
\end{proof}

Recall that there is another expression of a partition $(n_1 , \ldots n_t)$ of $n$, i.e., $(1^{m_1}, 2^{m_2} , \ldots, n^{m_n})$ where $m_i$ is the multiplicity of $i$ in $(n_1 , \ldots n_t)$.
For instance we see that $(4,2,2,1) = (1^1, 2^2, 3^0, 4^1, 5^0, 6^0, 7^0, 8^0, 9^0)$.

\begin{lem}\label{normalizer}
  Suppose that $\mathfrak{M}$ has type $(1^{m_1}, \ldots, n^{m_n})$.
  Then
  \[ \abs{\mathrm{N}(\mathfrak{M})} = \prod_{i = 1}^n m_i! \big(i(q^i - 1)\big)^{m_i}. \]
\end{lem}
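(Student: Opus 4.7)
The plan is to reduce to a convenient normal form via Proposition \ref{conjugatebd}, then to stratify $\mathrm{N}(\mathfrak{M})$ according to how its elements act on the primitive idempotents of $\mathfrak{M}$.

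First, Proposition \ref{conjugatebd} allows me to replace $\mathfrak{M}$ by a conjugate of the form $\bigoplus_{k=1}^{t}\rho_k(\FF_{q^{n_k}})$, block-diagonal in $\mathrm{Mat}(n,\Fq)$ with block sizes $n_1,\ldots,n_t$; this does not affect $\abs{\mathrm{N}(\mathfrak{M})}$. I would additionally arrange that, for each $i$, the $m_i$ regular representations $\rho_k$ with $n_k = i$ all come from a single fixed $\Fq$-basis of $\FF_{q^i}$, so that the $m_i$ blocks of size $i$ coincide as subalgebras of $\mathrm{Mat}(i,\Fq)$.

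Next I would examine the primitive idempotents $E_1, \ldots, E_t$ of $\mathfrak{M}$. They are intrinsic to $\mathfrak{M}$, so any $U \in \mathrm{N}(\mathfrak{M})$ must satisfy $U^{-1} E_k U = E_{\pi(k)}$ for some $\pi \in S_t$; and since $\Dim{E_k \Fq^n} = n_k$ is preserved under conjugation, $\pi$ lies in $\prod_{i=1}^n S_{m_i}$. This yields a homomorphism
\[\Phi : \mathrm{N}(\mathfrak{M}) \longrightarrow \prod_{i=1}^n S_{m_i}.\]
The kernel $K := \ker \Phi$ consists of those $U$ preserving each subspace $E_k \Fq^n$, hence of block-diagonal shape $U = \mathrm{diag}(U_1,\ldots,U_t)$ with $U_k \in \mathrm{GL}(n_k,\Fq)$ normalizing the division torus $\rho_k(\FF_{q^{n_k}})$ inside $\mathrm{Mat}(n_k,\Fq)$. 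By Lemma \ref{extremalone}, each factor contributes $n_k(q^{n_k}-1)$ choices, so $\abs{K} = \prod_{i=1}^n \bigl(i(q^i-1)\bigr)^{m_i}$.

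Surjectivity of $\Phi$ is witnessed by the block-permutation matrices that shuffle equal-size blocks as prescribed by a given $\pi \in \prod_{i=1}^n S_{m_i}$; since same-size blocks were arranged to be identical subalgebras, these permutation matrices lie in $\mathrm{N}(\mathfrak{M})$. Hence $\abs{\mathrm{N}(\mathfrak{M})} = \abs{K}\cdot \prod_{i=1}^n m_i!$, giving the claimed formula. The only delicate step is the common-basis normalization above: without it, a block-permutation matrix need not preserve $\mathfrak{M}$, and with it the kernel calculation cleanly reduces to $t$ independent applications of Lemma \ref{extremalone}.
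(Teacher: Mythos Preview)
Your argument is correct. Both you and the paper begin by conjugating $\mathfrak{M}$ to block-diagonal form via Proposition \ref{conjugatebd}, but the analyses diverge from there. The paper writes an arbitrary $U\in\mathrm{N}(\mathfrak{M})$ as a block matrix $(B_{i,j})$ and proves directly that $B_{i,j}=0$ whenever $n_i\neq n_j$: from $\rho_i(\FF_i)B_{i,j}=B_{i,j}\rho_j(\FF_j)$ it extracts a ring homomorphism $\FF_j\to\FF_i$, reaching a contradiction on dimensions. Having established this block shape, it then invokes Lemmas \ref{extremalone} and \ref{extremaltwo} together to read off the count. Your route is more group-theoretic: you use the intrinsic permutation action on the primitive idempotents to build the homomorphism $\Phi:\mathrm{N}(\mathfrak{M})\to\prod_i S_{m_i}$, compute $\ker\Phi$ block by block using only Lemma \ref{extremalone}, and exhibit surjectivity via block-permutation matrices (this is where your common-basis normalization genuinely matters). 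Your version makes the wreath-product structure explicit and bypasses the field-embedding contradiction entirely; the paper's version is terser but leaves to the reader precisely the step that your map $\Phi$ organizes.
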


\begin{proof}
  We may assume that $\mathfrak{M} = \bigoplus_{i=1}^t \rho_i(\FF_i)$ with $\DimK{\FF_i} = n_i$ and $(1^{m_1}, \ldots, n^{m_n}) = (n_1, \ldots, n_t)$.
  By Lemmas \ref{extremalone} and \ref{extremaltwo} it is sufficient to prove that if $U \in \mathrm{N}(\mathfrak{M})$ then $U$ is a block matrix $(B_{i,j})_{1 \leq i,j \leq t}$ with $B_{i,j} \in \mathrm{Mat}_{n_i \times n_j}(\FK)$ such that $B_{i,j}$ is the zero matrix whenever $n_i \neq n_j$.
  Suppose on the contrary that $n_i \neq n_j$ and $B_{i,j} \neq 0$.
  Since $\rho_i(\FF_i) B_{i,j} = B_{i,j} \rho_j(\FF_j)$ we know from Corollary \ref{regstdmodofmt} that there exists a nonzero map $\psi : \FF_j \rightarrow \FF_i$ such that $\forall\xi \in \FF_j$, $\exists\xi' \in \FF_i$, $\forall\eta\in\FF_j$, $ \psi(\xi\eta) = \xi' \psi(\eta)$.
  If we put $\varphi := \psi(1)^{-1}\psi$ then $\varphi(1) = 1$ and $\varphi(\xi\eta) = \xi'\varphi(\eta)$ for all $\eta \in \FF_j$.
  Since $\xi' = \varphi(\xi)$ we have $\varphi(\xi\eta) = \varphi(\xi)\varphi(\eta)$, i.e., $\varphi$ is a ring homomorphism.
  So $\FF_i$ becomes an $\FF_j$-module.
  It is impossible that $n_i < n_j$, because $\FF_j$ itself is an irreducible $\FF_j$-module.
  Since $B_{i,j}^{\mathrm{T}}\rho_i^{\mathrm{T}}(\FF_i) = \rho_j^{\mathrm{T}}(\FF_j)B_{i,j}^{\mathrm{T}}$ the case $n_i > n_j$ is impossible, too.
\end{proof}

\begin{prop}\label{conjugacysize}
  We have $\abs{\mathrm{Cl}(\mathfrak{M})} = \abs{\GLnq} / \abs{\mathrm{N}(\mathfrak{M})}$.
\end{prop}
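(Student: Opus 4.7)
The plan is to invoke the orbit–stabilizer theorem for the conjugation action of $\GLnq$ on the set of maximal tori of $\glnq$. First I would verify that $\GLnq$ indeed acts on the set of maximal tori by $U \cdot \mathfrak{T} := U^{-1}\mathfrak{T}U$: since conjugation is an $\Fq$-linear automorphism of $\mathrm{Mat}(n,\Fq)$ that preserves matrix multiplication (and hence the $p$-map and the bracket), and since it carries absolutely semisimple matrices to absolutely semisimple matrices, the image $U^{-1}\mathfrak{T}U$ is again a torus of the same dimension as $\mathfrak{T}$, so by Proposition \ref{maximaldim} it is again maximal.

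Next I would observe that, directly from the definitions, the orbit of $\mathfrak{M}$ under this action coincides with $\mathrm{Cl}(\mathfrak{M})$, and the stabilizer of $\mathfrak{M}$ coincides with $\mathrm{N}(\mathfrak{M})$. In particular $\mathrm{N}(\mathfrak{M})$ is a subgroup of $\GLnq$.

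Finally, the orbit–stabilizer theorem, applied to the finite group $\GLnq$ (note $\Fq$ is finite so $\abs{\GLnq} < \infty$), gives the desired bijection between $\mathrm{Cl}(\mathfrak{M})$ and the set of left cosets $\GLnq / \mathrm{N}(\mathfrak{M})$, whence
\[ \abs{\mathrm{Cl}(\mathfrak{M})} = \frac{\abs{\GLnq}}{\abs{\mathrm{N}(\mathfrak{M})}}. \]
There is no genuine obstacle here; the only point one must not overlook is the verification that conjugation preserves the class of maximal tori, which is why the action is well-defined on this set in the first place.
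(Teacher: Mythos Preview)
Your proposal is correct and matches the paper's approach exactly: the paper's proof is the single sentence ``It follows from the orbit-stabilizer counting principle.'' You have simply spelled out the verification that conjugation by $\GLnq$ gives a well-defined action on the set of maximal tori and identified $\mathrm{Cl}(\mathfrak{M})$ and $\mathrm{N}(\mathfrak{M})$ as the orbit and stabilizer, which is precisely what underlies that one-line proof.
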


\begin{proof}
  It follows from the orbit-stabilizer counting principle.
\end{proof}

Now we recall the important formula of A. Cayley:

\begin{prop}\label{cayleyformula}
  For every $n \in \Natural$ with indeterminate $q$ we have
  \begin{enumerate}
  \item \[\sum_{(1^{m_1}, \ldots, n^{m_n}) \vdash n} \Bigg(\prod_{i=1}^n \frac{1}{m_i! \big( i(1 - q^i)\big)^{m_i}} \Bigg) = \prod_{i=1}^n \frac{1}{1 - q^i}  ;\]
  \item \[\sum_{(1^{m_1}, \ldots, n^{m_n}) \vdash n} \Bigg(\prod_{i=1}^n \frac{1}{m_i! \big( i(q^i - 1)\big)^{m_i}} \Bigg) = \frac{q^{\frac{n(n-1)}{2}}}{\prod_{i=1}^n (q^i - 1)} .\]
  \end{enumerate}
\end{prop}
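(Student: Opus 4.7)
The plan is to introduce an auxiliary formal variable $z$ and read both identities off as the coefficient of $z^n$ in a suitable infinite product, working in the power-series ring $\mathbb{Q}(q)[[z]]$. The bridge between partition sums and products is the identity
\[
\exp\!\left(\sum_{i \geq 1} \frac{a_i\,z^i}{i}\right) \;=\; \sum_{n \geq 0} z^n \sum_{(1^{m_1},\ldots,n^{m_n}) \vdash n}\, \prod_{i=1}^n \frac{a_i^{m_i}}{m_i!\,i^{m_i}},
\]
valid for any sequence $(a_i)_{i \geq 1}$, which comes from writing $\exp\bigl(\sum_i a_i z^i/i\bigr) = \prod_i \exp(a_i z^i/i)$, expanding each factor and collecting by total degree in $z$.

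For (i) I would set $a_i := 1/(1-q^i)$. Inside the exponent I would expand $1/(1-q^i) = \sum_{k \geq 0} q^{ki}$, obtaining
\[
\sum_{i \geq 1}\frac{z^i}{i(1-q^i)} \;=\; \sum_{k \geq 0}\sum_{i \geq 1}\frac{(zq^k)^i}{i} \;=\; -\sum_{k \geq 0}\log(1 - z q^k),
\]
so the generating function equals $\prod_{k\geq 0}(1-zq^k)^{-1}$. I would then establish Euler's classical identity
\[
\prod_{k\geq 0}\frac{1}{1 - zq^k} \;=\; \sum_{n\geq 0}\frac{z^n}{\prod_{i=1}^n(1-q^i)}
\]
by letting $f(z)$ denote the left-hand side, verifying the functional equation $(1-z)\,f(z)=f(qz)$, and using it to recursively determine the coefficients of $f(z)=\sum c_n z^n$ from $c_0=1$ via $c_n = c_{n-1}/(1-q^n)$. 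Matching $z^n$-coefficients with the exponential formula yields (i).

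For (ii) I would run the same machinery with $a_i := 1/(q^i - 1)$, which only flips the sign inside the exponent and produces the generating function $\prod_{k\geq 0}(1-zq^k)$. This time I would expand directly,
\[
\prod_{k\geq 0}(1 - zq^k) \;=\; \sum_{n\geq 0} (-z)^n \sum_{0\leq i_1 < i_2 < \cdots < i_n} q^{i_1 + \cdots + i_n},
\]
and apply the substitution $j_k := i_k-(k-1)$ to convert the strict inequalities into weak ones. The inner sum then factors as $q^{n(n-1)/2}$ times the generating function $\prod_{i=1}^n(1-q^i)^{-1}$ for partitions with at most $n$ parts, and the sign $(-1)^n$ is absorbed via $\prod_{i=1}^n(q^i-1) = (-1)^n\prod_{i=1}^n(1-q^i)$, giving the right-hand side of (ii).

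The only real obstacle is bookkeeping: keeping track of the signs when passing between $(1-q^i)$ and $(q^i-1)$, and carefully justifying the formal manipulation of infinite products and logarithms in $\mathbb{Q}(q)[[z]]$. Both points are routine once the ambient ring is fixed, so the proof ultimately reduces to the exponential-formula identification plus two standard Eulerian product expansions.
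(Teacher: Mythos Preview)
Your argument is correct, but it takes a genuinely different route from the paper. The paper does not prove (i) at all: it simply cites Cayley's formula in Andrews' book. For (ii), the paper observes that replacing $q$ by $1/q$ in (i) and using $\sum_i i m_i = n$ immediately yields (ii); indeed, $i(1-q^{-i}) = i(q^i-1)/q^i$ turns the left-hand side of (i) into $q^n$ times the left-hand side of (ii), while $\prod_{i=1}^n (1-q^{-i})^{-1} = q^{n(n+1)/2}/\prod_{i=1}^n(q^i-1)$ gives the right-hand side after dividing through by $q^n$. Your approach, by contrast, supplies an actual proof of (i) via the exponential formula and Euler's functional equation for $\prod_{k\geq 0}(1-zq^k)^{-1}$, and then for (ii) you run the whole machine a second time on the reciprocal product $\prod_{k\geq 0}(1-zq^k)$ rather than reducing to (i). What you gain is a self-contained treatment that makes the generating-function mechanism explicit; what the paper gains is brevity, since its derivation of (ii) from (i) is a one-line substitution and avoids your second Eulerian expansion entirely. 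You might consider adopting the $q\mapsto 1/q$ shortcut for (ii) and keeping your generating-function proof only for (i).
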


\begin{proof}
  (i)
  It is the well-known Cayley's formula \cite[Example 1, p. 209]{Andrews}.
  (ii)
  Replace $q$ in (i) by $1/q$.
  And use the fact that $n = \sum_{i=1}^n im_i$.
\end{proof}

\begin{thm}\label{numofmtgl}
  The number of all maximal tori in $\glnq$ is $q^{n(n-1)}$.
\end{thm}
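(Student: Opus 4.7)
The plan is to assemble the results of the section into a single counting sum, indexed by the conjugacy classes of maximal tori. First I would recall, from Proposition \ref{numconjugacy}, that the conjugacy classes of maximal tori in $\glnq$ under $\GLnq$ are in bijection with partitions of $n$, written in the multiplicity form $(1^{m_1},\ldots,n^{m_n}) \vdash n$. Hence the total number of maximal tori equals
\[
\sum_{(1^{m_1},\ldots,n^{m_n}) \vdash n} \abs{\mathrm{Cl}(\mathfrak{M})},
\]
where $\mathfrak{M}$ ranges over one representative from each type.

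Next I would use Proposition \ref{conjugacysize} to write $\abs{\mathrm{Cl}(\mathfrak{M})} = \abs{\GLnq}/\abs{\mathrm{N}(\mathfrak{M})}$, and then substitute the formula from Lemma \ref{normalizer} to get
\[
\sum_{(1^{m_1},\ldots,n^{m_n}) \vdash n} \frac{\abs{\GLnq}}{\prod_{i=1}^n m_i! \bigl(i(q^i - 1)\bigr)^{m_i}} = \abs{\GLnq} \sum_{(1^{m_1},\ldots,n^{m_n}) \vdash n} \prod_{i=1}^n \frac{1}{m_i! \bigl(i(q^i - 1)\bigr)^{m_i}}.
\]
Pulling $\abs{\GLnq}$ out of the sum is legal because it does not depend on the partition.

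At this point Cayley's formula in the form of Proposition \ref{cayleyformula}(ii) evaluates the remaining sum as $q^{n(n-1)/2}/\prod_{i=1}^n (q^i - 1)$. Combined with the standard factorization $\abs{\GLnq} = \prod_{i=0}^{n-1}(q^n - q^i) = q^{n(n-1)/2}\prod_{i=1}^n (q^i - 1)$, the denominator cancels completely, leaving $q^{n(n-1)/2} \cdot q^{n(n-1)/2} = q^{n(n-1)}$.

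There is no real obstacle once the preceding lemmas are in hand — Proposition \ref{numconjugacy} classifies the orbits, Lemma \ref{normalizer} computes each stabilizer, and Cayley's identity performs the partition sum. The only point requiring a touch of care is to verify the standard formula for $\abs{\GLnq}$ and check that its $q^{n(n-1)/2}\prod(q^i-1)$ factorization exactly cancels the $\prod(q^i-1)$ in the denominator produced by Cayley, so that the final answer collapses cleanly to $q^{n(n-1)}$.
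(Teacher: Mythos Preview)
Your proposal is correct and follows essentially the same route as the paper's proof: sum over conjugacy classes (indexed by partitions via Proposition~\ref{numconjugacy}), replace each term using Proposition~\ref{conjugacysize} and Lemma~\ref{normalizer}, then apply Cayley's identity (Proposition~\ref{cayleyformula}(ii)) and the factorization of $\abs{\GLnq}$ to collapse the sum to $q^{n(n-1)}$. The only difference is that you spell out the cancellation slightly more explicitly than the paper does.
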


\begin{proof}
  Recall that $\abs{\GLnq} = \prod_{i = 1}^n (q^n - q^{n-i}) = q^{\frac{n(n-1)}{2}} \prod_{i = 1}^n (q^i - 1)$.
  So we are done by Proposition \ref{cayleyformula} (ii), because the desired number is
  \[\sum_{\mathfrak{M}} \abs{\mathrm{Cl}(\mathfrak{M})} = \sum_{\mathfrak{M}} \frac{\abs{\GLnq}}{\abs{\mathrm{N}(\mathfrak{M})}} = \abs{\GLnq} \times \sum_{\mathfrak{M}} \frac{1}{\abs{\mathrm{N}(\mathfrak{M})}} \]
  where the sum is over maximal tori of distinct types.
\end{proof}

\begin{rem}
  Accidentally, the quantity $q^{n(n-1)}$ is also well-known as the number of all nilpotent matrices in $\mathrm{Mat}(n,q)$.
  But I do not know whether or not there is a bijection between the set of all maximal tori in $\glnq$ and the set of all nipotent matrices in $\mathrm{Mat}(n,q)$.
\end{rem}

\section{Maximal tori in $\slnK$}

Throughout this section we also use the symbol $\FK$ as a field of positive characteristic $p$.
We denote respectively by $\mathrm{MT}_{\glnK}$ and $\mathrm{MT}_{\slnK}$ the sets of all maximal tori in $\glnK$ and $\slnK$.

We first review very well-known and basic facts in Lie theory.
(The proof of Proposition \ref{verybasicfact} (i) comes from \cite{Jacobson2}.)

\begin{prop}\label{verybasicfact}
  Suppose $p > 2$ or $n >2$.
  Then the following hold:
  \begin{enumerate}
  \item The only nontrivial ideals of $\glnK$ are $\FK I_n$ and $\slnK$;
  \item If $p \ndv n$ then there exists no nontrivial ideal of $\slnK$.
    If $p \dv n$ then $\FK I_n$ is the only nontrivial ideal of $\slnK$.
  \end{enumerate}
\end{prop}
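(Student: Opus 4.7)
The plan is to reduce both parts to a single extraction lemma: if an ideal of $\glnK$ or of $\slnK$ contains a matrix $X$ that is not a scalar multiple of $I_n$, then it already contains all of $\slnK$. Granting such a lemma, the list of nontrivial ideals is determined by a codimension comparison between $\FK I_n$, $\slnK$ and $\glnK$.

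To prove the extraction lemma I would aim first to place one off-diagonal matrix unit $E_{ij}$ inside the ideal. In the principal case, when $X=(\alpha_{kl})$ has some nonzero off-diagonal entry $\alpha_{ij}$ with $i\neq j$, a direct computation based on the relation $[E_{ab},E_{cd}]=\delta_{bc}E_{ad}-\delta_{da}E_{cb}$ yields the identity
\[ [E_{ji},[E_{ji},X]] \;=\; -2\alpha_{ij}\,E_{ji}, \]
which puts $E_{ji}$ in the ideal as soon as $p>2$. In the remaining case, when $X$ is diagonal but not a scalar, there exist $i\neq j$ with $\alpha_{ii}\neq\alpha_{jj}$, and $[X,E_{ij}]=(\alpha_{ii}-\alpha_{jj})E_{ij}$ is a nonzero multiple of $E_{ij}$. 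Every matrix appearing in these brackets already lies in $\slnK$, so the lemma applies uniformly to ideals of $\glnK$ and of $\slnK$.

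Once some $E_{i_0 j_0}$ has been extracted, I would propagate it to all of $\slnK$ through the standard relations $[E_{li_0},E_{i_0 j_0}]=E_{lj_0}$ for $l\notin\{i_0,j_0\}$, iterated to produce every off-diagonal $E_{kl}$, followed by $[E_{kl},E_{lk}]=E_{kk}-E_{ll}$ to supply the trace-zero diagonal part. Part (i) then falls out: a nonzero ideal $\mathfrak{I}\subseteq\glnK$ either lies in $\FK I_n$, forcing $\mathfrak{I}=\FK I_n$, or contains a non-scalar and so contains $\slnK$; since $\slnK$ has codimension one in $\glnK$, this pins $\mathfrak{I}$ down to $\slnK$ or $\glnK$. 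For (ii), if $p\ndv n$ then $\FK I_n\cap\slnK=0$, so any nonzero element of an ideal $\mathfrak{J}$ of $\slnK$ is non-scalar and the extraction lemma gives $\mathfrak{J}=\slnK$; if $p\dv n$ then $I_n\in\slnK$, and a nonzero $\mathfrak{J}$ either consists of scalars (so $\mathfrak{J}=\FK I_n$) or contains a non-scalar (so $\mathfrak{J}=\slnK$).

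The main obstacle I anticipate is the residual case $p=2$ with $n>2$, where the identity $[E_{ji},[E_{ji},X]]=-2\alpha_{ij}E_{ji}$ collapses to zero and the short extraction fails. I would handle it by exploiting $n\geq 3$ to pick a third index $k\notin\{i,j\}$ and combining brackets such as $[E_{jk},[E_{ki},X]]$ with further commutators against suitable $E_{ab}$'s to cancel extraneous terms and isolate a single $E_{ab}$. This analysis is exactly what forces the hypothesis $p>2$ or $n>2$: the excluded case $p=n=2$ is genuinely exceptional, since $\mathfrak{sl}(2,\FK)$ is then nilpotent by Proposition \ref{mtinsl} and therefore carries many nontrivial ideals.
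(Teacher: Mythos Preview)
Your plan is correct and essentially matches the paper's proof: the same double-bracket extraction $[[X,E_{ts}],E_{ts}]=-2\alpha_{st}E_{ts}$ for $p>2$, the same diagonal shortcut $[X,E_{st}]=(\alpha_{ss}-\alpha_{tt})E_{st}$, and the same propagation via $[E_{is},E_{st}]$ and $[E_{ij},E_{ji}]$ to fill out $\slnK$. For the residual case $p=2$, $n>2$ the paper makes your sketch explicit by computing $[[X,E_{ts}],E_{rs}]=\alpha_{st}E_{rs}+\alpha_{sr}E_{ts}$ and then bracketing once more against $E_{rr}$ (for ideals of $\glnK$) or against the trace-zero element $E_{rr}+E_{ss}$ (for ideals of $\slnK$) to isolate a single matrix unit.
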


\begin{proof}
  (i)
  Clearly, $\FK I_n$ and $\slnK$ are ideals of $\glnK$.
  Suppose that $\mathfrak{J}$ is a nonzero ideal$\neq \FK I_n$ of $\glnK$.
  Let $X = \sum_{i,j} \alpha_{i,j} E_{i,j} \in \mathfrak{J}\setminus \FK I_n$ where $\alpha_{i,j} \in \FK$ and $E_{i,j}$'s are elements of the standard basis for $\mathrm{Mat}(n,\FK)$.
  Assume $\alpha_{s,t} \neq 0$ for some $s,t$ with $s \neq t$.
  If $p > 2$ then $-2\alpha_{s,t}E_{t,s} = [[X,E_{t,s}],E_{t,s}] \in \mathfrak{J}$.
  So $E_{t,s} \in \mathfrak{J}$.
  If $n > 2$ then choose $r \neq s,t$.
  Then $\alpha_{s,t}E_{r,s} = [[[X,E_{t,s}],E_{r,s}],E_{r,r}] \in \mathfrak{J}$.
  So $E_{r,s} \in \mathfrak{J}$.
  On the other hand, if $\alpha_{s,t} = 0$ for all $s,t$ with $s \neq t$ then $X = \sum_i \alpha_{i,i}E_{i,i}$ so that $\alpha_{s,s} \neq \alpha_{t,t}$ for some $s,t$ with $s \neq t$.
  Since $(\alpha_{s,s} - \alpha_{t,t})E_{s,t} = [X, E_{s,t}] \in \mathfrak{J}$ we have $E_{s,t} \in \mathfrak{J}$.
  In all cases, $E_{s,t} \in \mathfrak{J}$ for some $s,t$ with $s \neq t$.
  Thus, $E_{i,t} = [E_{i,s}, E_{s,t}] \in \mathfrak{J}$ for all $i \neq t$, and $E_{s,j} = [E_{s,t}, E_{t,j}] \in \mathfrak{J}$ for all $j \neq s$.
  This facts imply that,  for all $i,j$ with $i \neq j$, $E_{i,j} \in \mathfrak{J}$ and so $E_{i,i} - E_{j,j} = [E_{i,j}, E_{j,i}] \in \mathfrak{J}$.
  Since $\mathfrak{J}$ is a proper ideal of $\glnK$ we have $\mathfrak{J} = \slnK$.
  (ii)
  Note that $p \dv n$ if and only if $I_n \in \slnK$.
  If $p \dv n$ then $\FK I_n$ is an ideal of $\slnK$.
  Let $\mathfrak{J}$ be a nonzero ideal$\neq \FK I_n$  of $\slnK$.
  The proof is similar with (i) except the case $n > 2$, since $E_{r,r} \not\in \slnK$.
  Thus suppose $p = 2$ and $n > 2$.
  Then $\alpha_{s,t}E_{r,s} + \alpha_{s,r}E_{t,s} = [[X, E_{t,s}], E_{r,s}] \in \mathfrak{J}$.
  If $\alpha_{s,r} = 0$ then $E_{r,s} \in \mathfrak{J}$.
  If $\alpha_{s,r} \neq 0$ then $\alpha_{s,r}E_{t,s} = [[[X, E_{t,s}], E_{r,s}], E_{r,r} + E_{s,s}] \in \mathfrak{J}$.
  So $E_{t,s} \in \mathfrak{J}$.
\end{proof}

\begin{rem}
  Two ideals $\FK I_n$ and $\slnK$ in $\glnK$ are in fact closed under the $p$-map, i.e., $\FK I_n$ and $\slnK$ are \textit{restricted} ideals of $\glnK$.
\end{rem}

Recall that if $p \ndv n$ then $\glnK$ is a direct sum of two restricted ideals $\FK I_n$ and $\slnK$ in $\glnK$, that is, $\glnK = \FK I_n\oplus \slnK$ as vector spaces and $[X,Y] = 0$ for every $X \in \FK I_n$ and $Y \in \slnK$.

\begin{lem}\label{bijglslpndvn}
  Let $p \ndv n$ and $\pi : \glnK \rightarrow \glnK / \FK I_n \cong \slnK$ a canonical projection.
  Then $\pi$ induces a bijection $\tilde{\pi}: \mathrm{MT}_{\glnK} \rightarrow \mathrm{MT}_{\slnK}$ described by $\tilde{\pi}(\mathfrak{M}) = \mathfrak{M} \cap \slnK$ and $\tilde{\pi}^{-1}(\mathfrak{M}_0) = \FK I_n \oplus \mathfrak{M}_0$.
\end{lem}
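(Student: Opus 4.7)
The plan is to exploit the vector-space decomposition $\glnK = \FK I_n \oplus \slnK$, which is valid precisely because $p \ndv n$, and to reduce both directions to the dimension criterion of Proposition~\ref{maximaldim}.

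First, given $\mathfrak{M} \in \mathrm{MT}_{\glnK}$, I would show that $\mathfrak{M} = \FK I_n \oplus (\mathfrak{M} \cap \slnK)$. The containment $\FK I_n \subseteq \mathfrak{M}$ follows from Proposition~\ref{closedmp}(ii). For each $X \in \mathfrak{M}$ the decomposition $X = \tfrac{\mathrm{tr}(X)}{n} I_n + \bigl(X - \tfrac{\mathrm{tr}(X)}{n} I_n\bigr)$ (where $1/n \in \FK$ exists because $p \ndv n$) has both summands in $\mathfrak{M}$, the second lying in $\slnK$. Since $\Dim{\mathfrak{M}} = n$ by Proposition~\ref{maximaldim}, this forces $\Dim(\mathfrak{M} \cap \slnK) = n - 1$. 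The subspace $\mathfrak{M} \cap \slnK$ inherits closure under the $p$-map, commutativity, and absolute semisimplicity from $\mathfrak{M}$, so it is a torus in $\slnK$, and the $\slnK$-half of Proposition~\ref{maximaldim} then declares it maximal. Under the canonical identification $\glnK / \FK I_n \cong \slnK$, this also shows $\pi(\mathfrak{M}) = \mathfrak{M} \cap \slnK$, justifying the stated description of $\tilde{\pi}$.

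Conversely, given $\mathfrak{M}_0 \in \mathrm{MT}_{\slnK}$ with $\Dim{\mathfrak{M}_0} = n - 1$, I would check directly that $\FK I_n \oplus \mathfrak{M}_0$ is a torus in $\glnK$. It is abelian because $I_n$ is central; it is closed under the $p$-map because $I_n$ and $Y$ commute in $\mathrm{Mat}(n,\FK)$, giving $(\alpha I_n + Y)^p = \alpha^p I_n + Y^p \in \FK I_n \oplus \mathfrak{M}_0$; and each $\alpha I_n + Y$ is a sum of two commuting absolutely semisimple matrices, hence absolutely semisimple. Its dimension is $n$, so Proposition~\ref{maximaldim} yields maximality in $\glnK$.

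Finally I would verify that the two assignments invert one another. The identity $(\FK I_n \oplus \mathfrak{M}_0) \cap \slnK = \mathfrak{M}_0$ reduces to $\FK I_n \cap \slnK = 0$, which is exactly the second place the hypothesis $p \ndv n$ enters (since $\alpha I_n \in \slnK$ forces $n\alpha = 0$ in $\FK$). The reverse identity $\FK I_n \oplus (\mathfrak{M} \cap \slnK) = \mathfrak{M}$ is the decomposition established in the first step. No step poses a genuine obstacle; the only care required is tracking the two distinct uses of $p \ndv n$ — once to divide by $n$ in the trace projection, and once to guarantee $\FK I_n \cap \slnK = 0$.
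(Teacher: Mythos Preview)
Your argument is correct and follows essentially the same route as the paper: both hinge on $I_n \in \mathfrak{M}$ (Proposition~\ref{closedmp}(ii)) together with the dimension criterion of Proposition~\ref{maximaldim} to pass between $\mathfrak{M}$ and $\mathfrak{M}\cap\slnK$. The only cosmetic difference is that the paper argues surjectivity by extending $\mathfrak{M}_0$ to some maximal torus of $\glnK$ and injectivity from $I_n\in\mathfrak{M}\setminus\mathfrak{M}_0$, whereas you explicitly build the inverse $\mathfrak{M}_0\mapsto \FK I_n\oplus\mathfrak{M}_0$ and verify the two assignments undo each other; the underlying content is the same.
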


\begin{proof}
  If we denote the canonical isomorphism $\glnK / \FK I_n \rightarrow \slnK$ by $\varphi$ then it is easy to see that $\tilde{\pi}(\mathfrak{M}) := (\varphi\circ\pi)(\mathfrak{M}) = \mathfrak{M} \cap \slnK$ for each $\mathfrak{M} \in \mathrm{MT}_{\glnK}$.
  We know from Proposition \ref{maximaldim} that, for any $\mathfrak{M}_0 \in \mathrm{MT}_{\slnK}$, there exists some $\mathfrak{M} \in \mathrm{MT}_{\glnK}$ with $\mathfrak{M}_0 \subseteq \mathfrak{M}$.
  So $\tilde{\pi}$ is surjective.
  Since $p \ndv n$, $I_n \not\in \mathfrak{M}_0$ for all $\mathfrak{M}_0 \in \mathrm{MT}_{\slnK}$.
  Since $I_n \in \mathfrak{M}$ for each $\mathfrak{M} \in \mathrm{MT}_{\glnK}$ by Proposition \ref{closedmp} (ii), $\tilde{\pi}$ is injective.
\end{proof}

\begin{lem}\label{bijglslpdvn}
  Suppose $p > 2$ or $n > 2$.
  If $p \dv n$ and we define $\psi : \mathrm{MT}_{\glnK} \rightarrow \mathrm{MT}_{\slnK}$ by $\mathfrak{M} \mapsto \mathfrak{M} \cap \slnK$ then $\psi$ is a bijection.
\end{lem}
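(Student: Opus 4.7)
The plan is to verify three things: (a) $\psi$ is well-defined, i.e.\ $\mathfrak{M}\cap\slnK$ really lies in $\mathrm{MT}_{\slnK}$; (b) $\psi$ is surjective; (c) $\psi$ is injective. The key structural input will be Corollary \ref{slselfcentral}, which tells us that a maximal torus in $\slnK$ is self-centralizing in $\slnK$ under the hypothesis $p>2$ or $n>2$, together with the dimension formulas in Proposition \ref{maximaldim} and the fact that $I_n\in\slnK$ when $p\mid n$.

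For well-definedness, I would first observe that $\mathfrak{M}\cap\slnK$ inherits commutativity, closure under the $p$-map, and absolute semisimplicity from $\mathfrak{M}$, so it is a torus in $\slnK$. To see it is maximal, I restrict the trace functional to $\mathfrak{M}$: if $\mathrm{tr}$ vanished on $\mathfrak{M}$, then $\mathfrak{M}$ would be a torus of dimension $n$ contained in $\slnK$, contradicting $\Dim{\mathfrak{M}_0}=n-1$ from Proposition \ref{maximaldim}. Hence $\mathrm{tr}\big|_{\mathfrak{M}}\colon\mathfrak{M}\to\FK$ is surjective and its kernel $\mathfrak{M}\cap\slnK$ has dimension $n-1$, which is maximal in $\slnK$.

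For surjectivity, given $\mathfrak{M}_0\in\mathrm{MT}_{\slnK}$, I regard $\mathfrak{M}_0$ as a torus of dimension $n-1$ sitting inside $\glnK$ and enlarge it to a torus $\mathfrak{M}$ of maximal dimension $n$ in $\glnK$ (such an enlargement exists since the dimensions of tori are bounded by $n$ and the union of an increasing chain of tori containing $\mathfrak{M}_0$ is again a torus). Then $\psi(\mathfrak{M})=\mathfrak{M}\cap\slnK\supseteq\mathfrak{M}_0$, and since both sides have dimension $n-1$ we conclude $\psi(\mathfrak{M})=\mathfrak{M}_0$.

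The injectivity step is the most substantive and is where the self-centralizing hypothesis is essential. Suppose $\mathfrak{M}_1,\mathfrak{M}_2\in\mathrm{MT}_{\glnK}$ satisfy $\mathfrak{M}_1\cap\slnK=\mathfrak{M}_2\cap\slnK=:\mathfrak{M}_0$. Since each $\mathfrak{M}_i$ is commutative and contains $\mathfrak{M}_0$, it lies in the centralizer $C_{\glnK}(\mathfrak{M}_0)$. Intersecting with $\slnK$ and applying Corollary \ref{slselfcentral} gives $C_{\glnK}(\mathfrak{M}_0)\cap\slnK = C_{\slnK}(\mathfrak{M}_0) = \mathfrak{M}_0$, so
\[
\Dim{C_{\glnK}(\mathfrak{M}_0)}\;\leq\;\Dim{\mathfrak{M}_0}+\Dim{\glnK/\slnK}\;=\;n.
\]
Since each $\mathfrak{M}_i$ already has dimension $n$ and is contained in $C_{\glnK}(\mathfrak{M}_0)$, both must equal $C_{\glnK}(\mathfrak{M}_0)$, hence $\mathfrak{M}_1=\mathfrak{M}_2$. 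The main obstacle is precisely this injectivity bound: one must exploit the ``almost self-centralizing'' behaviour of $\mathfrak{M}_0$ in $\glnK$ to cap the centralizer at dimension $n$, which in turn rests on the self-centralizing result established in Corollary \ref{slselfcentral}.
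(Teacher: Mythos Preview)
Your argument has a \emph{circularity problem}: you invoke Corollary~\ref{slselfcentral} (self-centralizing of maximal tori in $\slnK$) to prove injectivity of $\psi$, but in the paper Corollary~\ref{slselfcentral} is \emph{proved using} Lemma~\ref{bijglslpdvn}. Indeed, in case $p\mid n$ the proof of Corollary~\ref{slselfcentral} writes ``$X$ centralizes $\mathfrak{M}:=\gn{\mathfrak{M}_0}$ which is in $\mathrm{MT}_{\glnK}$ by Lemma~\ref{bijglslpdvn}''. So as the paper is organized you cannot appeal to Corollary~\ref{slselfcentral} at this point; your injectivity step therefore has no independent support.

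The paper avoids this by constructing an explicit inverse rather than using a centralizer bound. It sets $\eta(\mathfrak{M}_0):=\gn{\mathfrak{M}_0}$, the associative $\FK$-subalgebra of $\mathrm{Mat}(n,\FK)$ generated by $\mathfrak{M}_0$, and the entire content of the proof is to show $\DimK{\gn{\mathfrak{M}_0}}=n$ (so that $\eta$ lands in $\mathrm{MT}_{\glnK}$). This is done by assuming $\gn{\mathfrak{M}_0}=\mathfrak{M}_0$, writing $\mathfrak{M}_0\cong\bigoplus_i\FE_i$ as a product of separable extensions, and deriving a trace contradiction from the numerical constraints $\sum_i d_i=n-1$ and $\sum_i m_id_i=n$ together with the hypothesis $p>2$ or $n>2$. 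Once $\eta$ is known to be well-defined, it is visibly inverse to $\psi$ and both injectivity and surjectivity are immediate. Your well-definedness and surjectivity arguments are fine, but to repair injectivity without circularity you would need to establish directly that $\gn{\mathfrak{M}_0}$ has dimension $n$ (equivalently, that $\mathfrak{M}_0$ cannot itself be a unital subalgebra of $\mathrm{Mat}(n,\FK)$), which is exactly the computation the paper carries out.
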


\begin{proof}
  We define $\eta : \mathrm{MT}_{\slnK} \rightarrow \mathrm{MT}_{\glnK}$ by $\eta(\mathfrak{M}_0) := \gn{\mathfrak{M}_0}$ where $\gn{\mathfrak{M}_0}$ is the associative $\FK$-algebra in $\mathrm{Mat}(n,\FK)$ generated by $\mathfrak{M}_0$.
  If we prove $\eta$ to be well-defined then the proof will be done, because $\eta$ is obviously the inverse of $\psi$.
  So we claim $\Dim{\gn{\mathfrak{M}_0}} = n$.
  Assume to the contrary that $\gn{\mathfrak{M}_0} = \mathfrak{M}_0$.
  Then $\mathfrak{M}_0 = \bigoplus_{i=1}^s \FE_i$ for some separable extension fields $\FE_1, \ldots, \FE_s$ of $\FK$ as in the proof of Corollary \ref{dsofefs}.
  We put $r_i := \mathrm{rank}(1_{\FE_i})$, $d_i := [\FE_i : \FK]$ and $m_i := r_i / d_i$ for each $i \in \{1, \ldots, s\}$.
  Note that $m_i$'s are integers and, for all $X = \sum_{i} X_i \in \mathfrak{M}_0$ with $X_i \in \FE_i$,
  \[\mathrm{Tr}(X) = \sum_{i=1}^s m_i \mathrm{Tr}_{\FE_i / \FK}(X_i).\]
  Since $\sum_i d_i = n - 1$ and $\sum_i m_i d_i = \sum_i r_i = n$ we can see that $m_j = 2$ and $d_j = 1$ for some $j \in \{1, \ldots, s\}$, and $m_i = 1$ for each $i \neq j$.
  Suppose $p > 2$.
  Since $p \ndv m_j = 2$, $m_j \mathrm{Tr}_{\FE_j / \FK}$ is not identically zero.
  So $\mathrm{Tr}(\mathfrak{M}) \neq 0$, a contradiction.
  Suppose $n > 2$.
  Then $s \geq 2$.
  Since $m_i = 1$ for $i \neq j$, $m_i \mathrm{Tr}_{\FE_i / \FK}$ is not identically zero.
  So $\mathrm{Tr}(\mathfrak{M}) \neq 0$, a contradiction.
\end{proof}

\begin{rem}
  Suppose $p = n = 2$.
  Then the unique maximal torus in $\mathfrak{sl}(2, \FK)$ is $\FK I_2$.
  Therefore the map $\psi$ is clearly not injective.
\end{rem}

\begin{cor}\label{numofmtsl}
  Suppose $p > 2$ or $n > 2$.
  For each $q = \Pl$ with $\ell \in \Natural^+$, the number of all maximal tori in $\slnq$ is $q^{n(n-1)}$.
\end{cor}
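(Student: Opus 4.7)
The plan is to deduce the count directly from Theorem \ref{numofmtgl}, which says that $|\mathrm{MT}_{\glnq}| = q^{n(n-1)}$, by exhibiting an explicit bijection $\mathrm{MT}_{\glnq} \to \mathrm{MT}_{\slnq}$. Since the hypothesis $p > 2$ or $n > 2$ is exactly what is needed in Lemmas \ref{bijglslpndvn} and \ref{bijglslpdvn}, all the heavy lifting has already been done and only a case split remains.

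First I would separate the two arithmetic cases. If $p \nmid n$, then Lemma \ref{bijglslpndvn} provides a bijection $\tilde{\pi} : \mathrm{MT}_{\glnq} \to \mathrm{MT}_{\slnq}$ given by $\mathfrak{M} \mapsto \mathfrak{M} \cap \slnq$ with inverse $\mathfrak{M}_0 \mapsto \FK I_n \oplus \mathfrak{M}_0$, so $|\mathrm{MT}_{\slnq}| = |\mathrm{MT}_{\glnq}|$. If instead $p \mid n$, then the running hypothesis ($p > 2$ or $n > 2$) puts us in the setting of Lemma \ref{bijglslpdvn}, which again gives a bijection $\psi : \mathrm{MT}_{\glnq} \to \mathrm{MT}_{\slnq}$ sending $\mathfrak{M}$ to $\mathfrak{M} \cap \slnq$, with inverse sending a maximal torus $\mathfrak{M}_0$ to the associative $\FK$-algebra generated by $\mathfrak{M}_0$ inside $\mathrm{Mat}(n,\FK)$.

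In either case the cardinality equality $|\mathrm{MT}_{\slnq}| = |\mathrm{MT}_{\glnq}|$ holds, and Theorem \ref{numofmtgl} then gives $|\mathrm{MT}_{\slnq}| = q^{n(n-1)}$. Since $q = p^\ell$ is a specialization of the arbitrary field $\FK$ of characteristic $p$ to the finite field $\Fq$, both lemmas apply without modification.

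There is essentially no obstacle here: the result is a direct corollary, and the only thing one might pause over is checking that the hypothesis $p > 2$ or $n > 2$ correctly covers both sub-cases $p \nmid n$ and $p \mid n$ through the respective lemmas. The omitted edge case $p = n = 2$ is exactly where the bijection fails (as noted in the preceding remark, $\FK I_2$ is the unique maximal torus in $\mathfrak{sl}(2,\FK)$ and $\psi$ is not injective), which is why it is excluded from the statement.
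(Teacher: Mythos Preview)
Your proposal is correct and follows exactly the paper's approach: the paper's own proof simply cites Lemmas \ref{bijglslpndvn}, \ref{bijglslpdvn} and Theorem \ref{numofmtgl}, and your case split on $p \nmid n$ versus $p \mid n$ is precisely the intended unpacking of that citation.
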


\begin{proof}
  This result is due to Lemmas \ref{bijglslpndvn}, \ref{bijglslpdvn} and Theorem \ref{numofmtgl}.
\end{proof}

\begin{cor}\label{slselfcentral}
  Suppose $p > 2$ or $n > 2$.
  Then every maximal torus in $\slnK$ is self-centralizing.
\end{cor}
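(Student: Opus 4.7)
The plan is to reduce the self-centralization claim in $\slnK$ to the already-established self-centralization claim in $\glnK$ (Proposition \ref{mtingl}), passing through the bijections of Lemmas \ref{bijglslpndvn} and \ref{bijglslpdvn}. Let $\mathfrak{M}_0 \in \mathrm{MT}_{\slnK}$; by one of those lemmas (depending on whether $p \ndv n$ or $p \dv n$), there is a unique maximal torus $\mathfrak{M}$ in $\glnK$ with $\mathfrak{M}_0 = \mathfrak{M} \cap \slnK$. Explicitly, $\mathfrak{M} = \FK I_n \oplus \mathfrak{M}_0$ in the case $p \ndv n$, and $\mathfrak{M} = \gn{\mathfrak{M}_0}$ in the case $p \dv n$.

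The key step is to prove the equality $C_{\glnK}(\mathfrak{M}_0) = C_{\glnK}(\mathfrak{M})$. The inclusion $\supseteq$ is immediate from $\mathfrak{M}_0 \subseteq \mathfrak{M}$. For $\subseteq$, when $p \ndv n$ the extra direct summand $\FK I_n$ lies in the center of $\glnK$, so anything centralizing $\mathfrak{M}_0$ automatically centralizes $\mathfrak{M} = \FK I_n \oplus \mathfrak{M}_0$; when $p \dv n$, the centralizer of any subset of $\mathrm{Mat}(n,\FK)$ coincides with the centralizer of the associative subalgebra generated by that subset, so centralizing $\mathfrak{M}_0$ coincides with centralizing $\gn{\mathfrak{M}_0} = \mathfrak{M}$.

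Having established this equality, Proposition \ref{mtingl} gives $C_{\glnK}(\mathfrak{M}_0) = C_{\glnK}(\mathfrak{M}) = \mathfrak{M}$. Intersecting with $\slnK$ then yields
\[ C_{\slnK}(\mathfrak{M}_0) \;=\; C_{\glnK}(\mathfrak{M}_0) \cap \slnK \;=\; \mathfrak{M} \cap \slnK \;=\; \mathfrak{M}_0, \]
which is the desired self-centralization. No substantial obstacle is anticipated: each of the two cases reduces to a one-line structural observation, and all the heavy lifting has already been done in Section 4 and in Proposition \ref{mtingl}. The only point requiring minor care is to verify uniformly in both parity cases that the passage from $\mathfrak{M}_0$ to $\mathfrak{M}$ is the correct partner torus supplied by Lemmas \ref{bijglslpndvn}/\ref{bijglslpdvn}, which is why the hypothesis $p>2$ or $n>2$ is required (it excludes exactly the degenerate case $p=n=2$ where $\mathfrak{sl}(2,\FK)$ is itself nilpotent and the bijection fails).
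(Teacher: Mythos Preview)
Your proof is correct and follows essentially the same approach as the paper: both pass from $\mathfrak{M}_0$ to its partner maximal torus $\mathfrak{M}$ in $\glnK$ via Lemmas \ref{bijglslpndvn}/\ref{bijglslpdvn}, observe that anything centralizing $\mathfrak{M}_0$ also centralizes $\mathfrak{M}$ (because the extra piece is either $\FK I_n$ or generated associatively by $\mathfrak{M}_0$), invoke Proposition \ref{mtingl}, and intersect back with $\slnK$. The only cosmetic difference is that the paper phrases the argument as a contradiction with a chosen element $X$, whereas you write it as a direct chain of set equalities.
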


\begin{proof}
  Let $\mathfrak{M}_0 \in \mathrm{MT}_{\slnK}$ and assume that $\mathfrak{M}_0 \subsetneq \mathrm{C}_{\slnK}(\mathfrak{M}_0)$.
  Choose $X \in \mathrm{C}_{\slnK}(\mathfrak{M}_0) \setminus \mathfrak{M}_0$. 
  We consider two cases:
  (a) $p \ndv n$ and (b) $p \dv n$.
  (a)
  If $p \ndv n$ then $X$ centralizes $\mathfrak{M} := \FK I_n \oplus \mathfrak{M}_0$ which is in $\mathrm{MT}_{\glnK}$ by Lemma \ref{bijglslpndvn}.
  (b)
  If $p \dv n$ then $X$ centralizes $\mathfrak{M} := \gn{\mathfrak{M}_0}$ which is in $\mathrm{MT}_{\glnK}$ by Lemma \ref{bijglslpdvn}.
  In any case, $X \in \mathfrak{M}$ by Proposition~\ref{mtingl}.
  Since $X \in \slnK$ we have $X \in \mathfrak{M} \cap \slnK = \mathfrak{M}_0$, a contradiction.
\end{proof}


\begin{thebibliography}{99}
\bibitem{Andrews}
G. E. Andrews, The theory of partitions, \textit{Cambridge University Press}, Cambridge, 1998 (Reprint of the 1976 original).



\bibitem{Jacobson2}
N. Jacobson, Abstract derivation and Lie algebras, \textit{Trans. Amer. Math. Soc.} 42 (1937), no. 2, 206-224.


\bibitem{Winter}
D. Winter, On the toral structure of Lie $p$-algebras, \textit{Acta Math.} 123 (1969), 69-81. 

\end{thebibliography}
\end{document}